\newtheorem{theorem}{Theorem}[section]
\newtheorem{proposition}[theorem]{Proposition}
\newtheorem{corollary}[theorem]{Corollary}
\newtheorem{lemma}[theorem]{Lemma}
\theoremstyle{definition}
\newtheorem{remark}[theorem]{Remark}
\theoremstyle{definition}
\theoremstyle{remark}
\renewcommand{\u}{\mathfrak A}
\begin{document}
\title{An integral formula for multiple summing norms of operators}
\author{Daniel Carando \and Ver\'onica Dimant \and Santiago Muro \and Dami\'an Pinasco}
\thanks{This work was partially supported by CONICET PIP 0624, ANPCyT PICT 2011-1456, ANPCyT PICT 11-0738,
UBACyT 1-746 and UBACyT 20020130300052BA}

\address{Daniel Carando. Departamento de Matem\'{a}tica - Pab I,
Facultad de Cs. Exactas y Naturales, Universidad de Buenos Aires,
(1428) Buenos Aires, Argentina and IMAS-CONICET} \email{dcarando@dm.uba.ar}

\address{Ver\'onica Dimant. Departamento de Matem\'{a}tica, Universidad de San
Andr\'{e}s, Vito Dumas 284, (B1644BID) Victoria, Buenos Aires,
Argentina and CONICET} \email{vero@udesa.edu.ar}

\address{Santiago Muro. Departamento de Matem\'{a}tica - Pab I,
Facultad de Cs. Exactas y Naturales, Universidad de Buenos Aires,
(1428) Buenos Aires, Argentina and CONICET} \email{smuro@dm.uba.ar}

\address{Dami\'an Pinasco. Departamento de Matem\'aticas y Estad\'{\i}stica,
Universidad Torcuato di Tella, Av. F. Alcorta 7350, (1428), Ciudad Aut\'onoma de Buenos Aires, ARGENTINA and
CONICET}
\email{{\tt dpinasco@utdt.edu}}

\keywords{absolutely summing operators, multilinear operators, multiple summing operators, stable measures}
\subjclass[2010]{15A69,15A60,47B10,47H60,46G25}

\maketitle

\begin{abstract}
We prove that the multiple summing norm of multilinear operators defined on some $n$-dimensional real or
complex vector spaces with the $p$-norm may be
written as an integral with respect to stables measures. As an application we show inclusion and
coincidence results for multiple summing mappings. We also present some contraction properties  and compute or
estimate the limit orders of this class of operators.
\end{abstract}

\section*{Introduction}

The rotation invariance of the Gaussian measure on $\mathbb K^N$, which we will denote by $\mu_2^N$, allows us
to
show  the Khintchine equality. It asserts that if $c_{2,q}$ denotes the $q$-th moment of the one dimensional
Gaussian measure, and $\ell_2^N$ denotes $\mathbb K^N$ with the euclidean norm, then for
any $\alpha\in\mathbb K^N$, $1\le q<\infty$,
\begin{equation}\label{khintchine gaussiano}
c_{2,q}\|\alpha\|_{\ell_2^N}=
\Big(\int_{\mathbb K^N}|\langle\alpha,z\rangle|^qd\mu^N_2(z)\Big)^{1/q}.
\end{equation}
We may interpret this formula as follows: the norm of a linear functional $\alpha$ on $\ell_2^N$ is a multiple
of the $L^q$-norm of the linear functional with respect to the Gaussian measure on $\ell_2^N$. One may ask if
there is a formula like (\ref{khintchine gaussiano}) for linear functionals on some other space, or  even for
linear or multilinear operators. For linear functionals, an answer is provided by the $s$-stable L\'evy measure
(see for example \cite[24.4]{DefFlo93}): for $s< 2$ there exists a measure on $\mathbb K^N$, called the {\it
$s$-stable L\'evy measure} and denoted by $\mu_{s}$, which satisfies that for any $0<q<s$, $\alpha\in\mathbb
K^N$,
\begin{equation}\label{estable}
c_{s,q}\|\alpha\|_{\ell_s^N}=\Big(\int_{\mathbb K^N}|\langle\alpha,z\rangle|^qd\mu^N_{s}(z)\Big)^{1/q}, 
\end{equation}
where
$$c_{s,q}=\Big(\int_{\mathbb K}|z|^qd\mu_{s}^1(z)\Big)^{1/q}.$$
The question for linear operators is more subtle because there are many norms which are natural to consider on $\mathcal
L(\ell_2^N)$. The first result in this direction is due to Gordon
\cite{Gor69} (see also \cite[11.10]{DefFlo93}), who showed that the formula holds for the identity operator on
$\ell_2^N$, considering the absolutely $p$-summing norm of $id_{\ell_2^N}$, that is
$$\pi_p(id_{\ell_2^N})=c_{2,q}\Big(\int_{\mathbb K^N}\|z\|_{\ell_2^N}^q \, d\mu^N_2(z)\Big)^{1/q}.$$
Pietsch \cite{Pie72} extended this formula for arbitrary linear operators from $\ell_{s'}^N\to\ell_s^N$,
$s\ge2$ and used it to compute some limit orders (see also \cite[22.4.11]{Pie80}).

To generalize the formula to the multilinear setting there is again a new issue, because there are many
natural candidates of classes of multilinear operators that extend the ideal of absolutely $p$-summing linear
operators (for instance the articles \cite{PelSan11,Per05} are devoted to their comparison).
Among those candidates, the ideal of multiple summing multilinear operators is considered by many authors the
most important of these extensions and is also the most studied one. Some of the reasons are its
connections with the Bohnenblust-Hille inequality \cite{PerVil04}, or the results on the unconditional
structure of the space of multiple summing operators \cite{DefPer08}. Multiple summing operators were
introduced by Bombal, P\'erez-Garc\'ia and Villanueva \cite{BomPerVil04} and independently by Matos
\cite{Mat03}.
In this note we show that multiple summing operators constitute the correct framework for a multilinear
generalization of formula \eqref{khintchine gaussiano}. For this we present integral formulas for the exact
value of the multiple summing norm of multilinear forms and operators defined on $\ell_p^N$ for some values
of $p$. Moreover, we prove that for some other finite dimensional Banach spaces these formulas hold  up to
some constant independent of the dimension.

One particularity of the class of multiple summing operators on Banach spaces is that, unlike the linear
situation,
there is no general inclusion result. In \cite{BotMicPel10,Per04,Pop09} the authors investigate this problem
and prove several results showing that on some Banach spaces inclusion results hold, but on some other spaces
not. The integral formula for the multiple summing norm, together with Khintchine/Kahane type inequalities
will allow us to show some new coincidence and inclusion results for multiple summing operators.

Another application of these formulas deals with unconditionality in tensor products. Defant and
P\'erez-Garc\'ia showed in \cite{DefPer08} that the tensor norm associated to the ideal of multiple 1-summing
multilinear forms preserves unconditionality on $\mathcal L_r$ spaces. As a consequence of our formulas, we
give a simple proof of this fact for $\ell_r$ with $ r \ge 2$. Moreover, we show that
vector-valued multiple 1-summing
operators also satisfy a kind of unconditionality property in the appropriate range of Banach spaces.
Finally, we compute limit orders for the ideal of multiple summing operators.

Our main results are stated in Theorems~\ref{formulita escalar} and \ref{formulita}, which give an exact
formula for the multiple summing norm, and Proposition~\ref{normas equivalentes}, which gives integral
formulas for
estimating these norms in a wider range of spaces.

\section{Main results and their applications}

Let $E_1,\dots,E_m,F$ be real or complex Banach spaces. Recall that an $m$-linear operator
$T\in\mathcal L(E_1,\dots,E_m;F)$ is {\it multiple $p$-summing} if there exists $C>0$ such that for all finite sequences of vectors $(x^1_{j_1})_{j_1=1}^{J_1}\subset E_1,\dots,(x^m_{j_m})_{j_m=1}^{J_m}\subset E_m$
$$
\left(\sum_{j_1,\dots,j_m}\|T(x^1_{j_1},\dots,x^m_{j_m})\|_F^p\right)^{\frac1{p}}\le C
w_p((x_{j_1}^1)_{j_1})\dots w_p((x_{j_m}^m)_{j_m}),
$$
where $$w_p((y_j)_j)=\sup\left\{\Big(\sum_j|\gamma(y_j)|^p\Big)^{1/p}\,:\,\gamma\in B_{E'}\right\}.$$ The
infimum of all those constants $C$ is the multiple $p$-summing norm of $T$ and is denoted by $\pi_p(T)$. The
space of multiple $p$-summing multilinear operators is denoted by $\Pi_p(E_1,\dots,E_m;F)$. When
$E_1=\dots=E_m=E$, the spaces of continuous and multiple $p$-summing multilinear are denoted by $\mathcal
L(^mE;F)$ and $\Pi_p(^mE;F)$ respectively.

The following theorems are our main results. Their proofs will be given in Section~ \ref{sec-proofs}.

\begin{theorem}\label{formulita escalar}
Let $\phi$ be a multilinear form in $\mathcal L(^m\ell_r^N;\mathbb K)$, $p<r'<2$ or $r=2$. Then
$$
\pi_p(\phi)=\frac{1}{c_{r',p}^m}\  \Big(\int_{\mathbb K^N}\dots\int_{\mathbb
K^N}|\phi(z^{(1)},\dots,z^{(m)})|^pd\mu^N_{r'}(z^{(1)})\dots d\mu^N_{r'}(z^{(m)})\Big)^{1/p}.
$$
\end{theorem}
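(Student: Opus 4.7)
The plan is to prove the two inequalities $\pi_p(\phi) \le c_{r',p}^{-m} I^{1/p}$ and $\pi_p(\phi) \ge c_{r',p}^{-m} I^{1/p}$ separately, where $I$ denotes the iterated integral on the right-hand side. Two simple facts will drive the argument. The first is the stable formula (\ref{estable}) (or its Gaussian counterpart (\ref{khintchine gaussiano}) in the case $r=2$), which realizes the $\ell_{r'}^N$-norm of a linear functional as a fixed multiple of its $L^p(\mu^N_{r'})$-norm precisely under the integrability condition $p<r'$. The second is the elementary observation that, for any linear functional $L \in (\ell_r^N)' = \ell_{r'}^N$, the $p$-summing norm coincides with the operator norm: plugging $\gamma = L/\|L\|$ into the definition of $w_p$ yields $(\sum_j |L(x_j)|^p)^{1/p} \le \|L\|_{\ell_{r'}^N}\, w_p((x_j))$, and equality is attained on a single norm-attaining vector.

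For the upper bound I would proceed by peeling off one variable at a time. Fix sequences $(x^k_{j_k})_{j_k=1}^{J_k} \subset \ell_r^N$ with $w_p((x^k_{j_k})_{j_k}) \le 1$ for each $k$. Freezing $j_2, \ldots, j_m$, the $p$-summing inequality for the linear functional $L_1 = \phi(\cdot, x^2_{j_2}, \ldots, x^m_{j_m})$ on $\ell_r^N$ combined with the stable formula gives
$$ \sum_{j_1} |\phi(x^1_{j_1}, x^2_{j_2}, \ldots, x^m_{j_m})|^p \le \|L_1\|_{\ell_{r'}^N}^p = c_{r',p}^{-p} \int_{\mathbb K^N} |\phi(z^{(1)}, x^2_{j_2}, \ldots, x^m_{j_m})|^p \, d\mu^N_{r'}(z^{(1)}). $$
Summing over $j_2, \ldots, j_m$, interchanging sum and integral by Fubini, and iterating the same argument on each remaining variable yields $\sum_{j_1, \ldots, j_m} |\phi(x^1_{j_1}, \ldots, x^m_{j_m})|^p \le c_{r',p}^{-mp} I$, and the claim follows by taking the $p$-th root and the supremum over admissible sequences.

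For the reverse inequality I would exhibit nearly-extremal test sequences by sampling. For each $k = 1, \ldots, m$ draw i.i.d. samples $z^{(k)}_1, \ldots, z^{(k)}_M$ from $\mu^N_{r'}$. The strong law of large numbers yields
$$ \frac{1}{M^m} \sum_{\omega_1, \ldots, \omega_m} |\phi(z^{(1)}_{\omega_1}, \ldots, z^{(m)}_{\omega_m})|^p \longrightarrow I \quad \text{a.s. as } M \to \infty, $$
while rewriting (\ref{estable}) as $\int |\langle\gamma, z\rangle|^p d\mu^N_{r'}(z) = c_{r',p}^p \|\gamma\|_{\ell_{r'}^N}^p$ and applying a uniform law of large numbers over the compact ball $B_{\ell_{r'}^N}$ gives
$$ \frac{1}{M}\, w_p\!\big((z^{(k)}_\omega)_{\omega=1}^M\big)^p = \sup_{\gamma \in B_{\ell_{r'}^N}} \frac{1}{M} \sum_\omega |\langle\gamma, z^{(k)}_\omega\rangle|^p \longrightarrow c_{r',p}^p \quad \text{a.s.} $$
Feeding the sampled sequences into the definition of $\pi_p(\phi)$, the factors of $M^{m/p}$ in numerator and denominator cancel, and passing to the limit yields $\pi_p(\phi) \ge c_{r',p}^{-m} I^{1/p}$.

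The main technical obstacle is the uniform LLN in the lower bound: the pointwise convergence at each fixed $\gamma$ must be upgraded to uniform convergence over $B_{\ell_{r'}^N}$ in order to identify the limit of $M^{-1/p} w_p$. I would handle this by covering $B_{\ell_{r'}^N}$ with a finite $\varepsilon$-net and invoking equi-continuity of the empirical functionals, which follows from the integrability of $\|z\|_{\ell_r^N}^p$ with respect to $\mu^N_{r'}$ --- itself a consequence of the hypothesis $p < r'$ via the stable formula.
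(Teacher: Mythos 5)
Your argument is correct, and the two halves compare differently with the paper. The upper bound $\pi_p(\phi)\le c_{r',p}^{-m}I^{1/p}$ is essentially the paper's proof: the paper runs an induction on $m$ that peels off one variable at a time, reduces to the linear case $\pi_p(L)=\|L\|_{\ell_{r'}^N}$, and converts that norm into an integral via \eqref{estable}; your ``freeze the other indices, apply the linear functional estimate, Fubini, iterate'' is the same computation in a slightly different order. The lower bound is where you genuinely diverge. The paper obtains $I^{1/p}\le c_{r',p}^{m}\pi_p(\phi)$ in one line from P\'erez-Garc\'{\i}a's integral domination result (Proposition~\ref{perez}, stated for vector-valued $f_j\in L_p(\mu_j,X_j)$) applied to $f_j(z)=z$ together with the rotation invariance of $\mu_{r'}^N$; that proposition is a continuous analogue of the defining inequality of $\pi_p$ and is reused later for the vector-valued results. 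You instead discretize: sample i.i.d.\ from $\mu_{r'}^N$, feed the empirical sequences into the definition of $\pi_p$, and pass to the limit. This is self-contained (no external proposition) but costs you two pieces of probabilistic bookkeeping: the convergence of the $m$-fold empirical average is a multi-sample V-statistic limit, not the classical i.i.d.\ SLLN (the $M^m$ summands share sample points), though convergence in probability — which is all you need, since a single good realization suffices — follows from integrability of $|\phi|^p$, guaranteed by $p<r'$; and the uniform law of large numbers over $B_{\ell_{r'}^N}$, which you correctly flag and which your net-plus-equicontinuity argument handles (using $||a|^p-|b|^p|\le |a-b|^p$ or its $p>1$ analogue and the finiteness of $\int\|z\|^p\,d\mu_{r'}^N$). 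Both routes are valid; the paper's is shorter and yields the vector-valued Lemma~\ref{formulita desig facil} needed for Theorem~\ref{formulita}, while yours makes explicit that the stable measure is an asymptotically extremal ``weakly $\ell_p$'' configuration, which is a nice piece of intuition the paper leaves implicit.
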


Before we state our second theorem, let us recall some necessary definitions and facts.
For $1\leq q \leq \infty$ and $1 \leq \lambda < \infty$ a normed space $X$ is called an
$\mathcal{L}_{q,\lambda}^g$\emph{-space}, if for each finite dimensional  subspace $M \subset X$ and
$\varepsilon >0$ there are $R \in \mathcal{L}(M,\ell_q^m)$ and $S \in \mathcal{L}(\ell_q^m, X)$ for some $m \in
\mathbb{N}$ factoring the inclusion map $I_M^X:M\to X$ such that $\|S\| \|R\| \leq \lambda + \varepsilon$:
\begin{equation} \label{facto}
\xymatrix{ M  \ar@{^{(}->}[rr]^{I_M^X} \ar[rd]^{R} & & {X} \\
& {\ell_q^m} \ar[ur]^{S} & }.
\end{equation}
$X$ is called an $\mathcal{L}_{q}^g$\emph{-space} if it is an $\mathcal{L}_{q,\lambda}^g$-space for some $\lambda \geq 1$.
Loosely speaking, $\mathcal{L}_{q}^g$-spaces share many properties of $\ell_q$, since they \emph{locally look like $\ell_q^m$}. The spaces $L_q(\mu)$ are $\mathcal{L}_{q,1}^g$-spaces.
For more information and properties of $\mathcal{L}_{q}^g$-spaces see \cite[Section 23]{DefFlo93}.

\begin{theorem}\label{formulita}
Let $T$ be a multilinear map in $\mathcal L(^m\ell_r^N;X)$, where $X$ is an $\mathcal L_{q,1}^g$-space and suppose $r$, $q$ and $p>0$ satisfy one of the following conditions
\begin{itemize}
\item[a)] $r=q=2$;
\item[b)] $r=2$ and either $p<q<2$ or $p=q$;
\item[c)] $p<r'<2$ and either $p<q\le 2$ or $p=q$.
\end{itemize}
Then
$$
\pi_p(T)= \frac{1}{c_{r',p}^m}\ \Big(\int_{\mathbb K^N}\dots\int_{\mathbb
K^N}\|T(z^{(1)},\dots,z^{(m)})\|_X^p \, d\mu^N_{r'}(z^{(1)})\dots d\mu^N_{r'}(z^{(m)})\Big)^{1/p}.
$$
\end{theorem}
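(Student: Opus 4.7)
The plan is to prove the two inequalities $\pi_p(T)^p \le c_{r',p}^{-mp}J(T)$ and $\pi_p(T)^p \ge c_{r',p}^{-mp}J(T)$ separately by complementary techniques, where $J(T)$ denotes the iterated integral on the right-hand side. The upper bound will use an integral representation of the norm on $\ell_q^{m_0}$ via the stable measure from \eqref{estable} together with Theorem~\ref{formulita escalar}, while the lower bound is a discretization argument based on a law of large numbers that works for any target space. A preliminary step reduces to the case $X=\ell_q^{m_0}$ by using the $\mathcal{L}_{q,1}^g$-structure: since $M:=\operatorname{Im}(T)$ is finite-dimensional, for every $\varepsilon>0$ the inclusion $I_M^X$ factors as $S\circ R$ with $R\colon M\to\ell_q^{m_0}$, $S\colon\ell_q^{m_0}\to X$, and $\|S\|\|R\|\le 1+\varepsilon$. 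Setting $U=R\circ T$, the pointwise inequalities $\|U(z)\|\le\|R\|\|T(z)\|$ and $\|T(z)\|\le\|S\|\|U(z)\|$ give matching bounds for both $\pi_p$ and $J$; thus once the formula is established for $U$, it transfers to $T$ by letting $\varepsilon\to 0$, with the value $\lambda=1$ in the hypothesis being essential for producing an equality (and not just a bound up to a constant).

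For the upper bound on $U$, when $p<q$ (so $q\le 2$) or $q=2$, I apply the stable identity $\|y\|_{\ell_q^{m_0}}^p = c_{q,p}^{-p}\int_{\mathbb{K}^{m_0}}|\langle y,w\rangle|^p\,d\mu_q^{m_0}(w)$ from \eqref{estable}. For any admissible sequences this gives, after Fubini, $\sum_{\mathbf{j}}\|U(x^1_{j_1},\dots,x^m_{j_m})\|^p = c_{q,p}^{-p}\int\sum_{\mathbf{j}}|\phi_w(x^1_{j_1},\dots,x^m_{j_m})|^p\,d\mu_q^{m_0}(w)$, where $\phi_w=\langle U(\cdot),w\rangle$ is a scalar multilinear form on $(\ell_r^N)^m$. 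For each $w$, the admissibility inequality combined with Theorem~\ref{formulita escalar} yields $\sum_{\mathbf{j}}|\phi_w(x_{\mathbf{j}})|^p \le c_{r',p}^{-mp}\int|\phi_w(z)|^p\,d\mu$; one more application of Fubini together with the stable identity used in reverse then produces $\sum_{\mathbf{j}}\|U(x_{\mathbf{j}})\|^p \le c_{r',p}^{-mp}J(U)$. The edge case $p=q$, where the stable representation collapses, is treated separately by decomposing $U$ into its scalar coordinate forms $\phi_1,\dots,\phi_{m_0}$, using $\|U(z)\|_{\ell_p^{m_0}}^p=\sum_k|\phi_k(z)|^p$, and invoking Theorem~\ref{formulita escalar} componentwise.

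For the lower bound I use a sampling argument that is independent of the structure of $X$. For each slot $k=1,\dots,m$, sample $L$ i.i.d.\ random vectors $z^{(k)}_1,\dots,z^{(k)}_L\sim\mu_{r'}^N$, with the $m$ families mutually independent, and set $x^k_{j_k}=z^{(k)}_{j_k}/L^{1/p}$. The $p$-homogeneity of $T$ together with a tensor law of large numbers gives $\sum_{\mathbf{j}}\|T(x_{\mathbf{j}})\|^p = L^{-m}\sum_{\mathbf{j}}\|T(z^{(1)}_{j_1},\dots,z^{(m)}_{j_m})\|^p \to J(T)$, while the $r'$-stability (whereby $\langle\gamma,z^{(k)}_j\rangle$ has the distribution of $\|\gamma\|_{r'}Z$ with $Z$ standard $r'$-stable) combined with a uniform LLN over the compact unit ball of $\ell_{r'}^N$ gives $w_p((x^k_{j_k})_{j_k})^p \to c_{r',p}^p$. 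The admissibility inequality $\sum_{\mathbf{j}}\|T(x_{\mathbf{j}})\|^p \le \pi_p(T)^p\prod_k w_p((x^k_{j_k}))^p$ then forces $J(T)\le\pi_p(T)^p c_{r',p}^{mp}$ in the limit $L\to\infty$. The hypotheses of all three cases ensure the necessary moment condition $\int|z|^p\,d\mu_{r'}^1<\infty$ ($p<r'$ when $r'<2$, any $p$ when $r'=2$). The main obstacle I expect is precisely this uniform LLN: pointwise convergence for fixed $\gamma$ is elementary, but to conclude $\sup_{\|\gamma\|_{r'}\le 1}L^{-1}\sum_j|\langle\gamma,z^{(k)}_j\rangle|^p \to c_{r',p}^p$ one has to combine the compactness of the unit ball in $\ell_{r'}^N$ with equicontinuity of the empirical averages in $\gamma$; in the finite-dimensional setting this is standard but demands care in quantifying rates so that the $w_p$ and the integral limits can be taken simultaneously along a single realization.
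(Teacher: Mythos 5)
Your decomposition into the two inequalities, the reduction to $X=\ell_q^{M}$ via the $\mathcal L_{q,1}^g$-factorization, and the proof of $\pi_p(T)\le c_{r',p}^{-m}J(T)^{1/p}$ all match the paper: the paper also passes to $\ell_q^M$, composes with the stable representation of the $\ell_q$-norm (its Lemma giving $c_{q,p}\pi_p(T)\le(\int\pi_p(z\circ T)^p\,d\mu_q^M(z))^{1/p}$, which is exactly your $\phi_w=\langle U(\cdot),w\rangle$ step), applies the scalar Theorem~\ref{formulita escalar} to each slice, and undoes the representation by Fubini; the endpoint $p=q$ is likewise handled coordinatewise (via $q$-concavity of $\ell_q$, Lemma~\ref{lema1} and Corollary~\ref{pi_q en L_q}), which is the same idea as your $\|U(z)\|_{\ell_p}^p=\sum_k|\phi_k(z)|^p$ decomposition. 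Where you genuinely diverge is the other inequality $\pi_p(T)\ge c_{r',p}^{-m}J(T)^{1/p}$: the paper gets this in one stroke from P\'erez-Garc\'{\i}a's integral form of the summing inequality (Proposition~\ref{perez}) applied to $f_j(z)=z$, using rotation invariance of $\mu_{r'}^N$ to evaluate the weak $L_p$-factor as $c_{r',p}$ (Lemma~\ref{formulita desig facil}), and this works for an arbitrary target space $Y$ exactly as you claim for your version. Your sampling argument --- i.i.d.\ draws from $\mu_{r'}^N$, the multi-sample strong law for $L^{-m}\sum_{\mathbf j}\|T(z_{\mathbf j})\|^p$, and a uniform law of large numbers over the compact ball $B_{\ell_{r'}^N}$ to control $w_p$ --- is in effect a self-contained probabilistic re-proof of that cited proposition in this special case. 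It is correct under the stated moment condition ($p<r'$ when $r'<2$, Gaussian otherwise), and the uniform LLN is unproblematic in finite dimensions (a.s.\ equicontinuity from $\big||a|^p-|b|^p\big|\lesssim$ modulus in $|a-b|$ plus $L^{-1}\sum_j\|z_j\|_r^p$ a.s.\ bounded); note also that you do not actually need to synchronize rates, since the two a.s.\ limits hold on a common full-measure event. The trade-off: the paper's route is shorter and immediately gives the inequality for general $Y$ by citation, while yours is elementary and avoids importing Proposition~\ref{perez} at the cost of the measure-theoretic bookkeeping.
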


It is clear that Theorem~\ref{formulita escalar} follows from Theorem~\ref{formulita}, but in fact, the proof of Theorem~\ref{formulita} uses the scalar result, which is much simpler and is interesting on its own.
We remark that the formula also holds for any  multilinear map in $\mathcal
L(\ell_{r_1}^N,\dots,\ell_{r_m}^N;X)$, where $X$ is an $\mathcal L_{q,1}^g$-space and
$r_1,\dots,r_m$, $q$ and $p$ satisfy conditions analogous to those of Theorem~\ref{formulita}. Moreover,  the
formula turns into an
equivalence between the $\pi_p$ norm and the integral if we take general $\mathcal L_q^g$-spaces.

On the other hand,  if we put $\ell_r$ in the domain, since multiple summing operators form a maximal ideal,
the formula holds with a limit over $N$ in the right hand side (here we consider $\mathbb K^N$ as a subset of
$\ell_r$).

There are situations  not covered by the previous theorem where  we have an equivalence or, at least, an
inequality between the $\pi_p$ and the $L_p(\mu_{s})$ norms.

\begin{proposition} \label{normas equivalentes} Let $T\in\mathcal L(^m\ell_r^N;X).$

($i$) Suppose either $r=2$ and $p,q<2;$ or $r=2$ and $q\le p$; or $p<r'<2$ and $q\le 2$. If $X$ is an $\mathcal L_{q}^g$-space, then we have
$$
\pi_p(T)\asymp \left(\int_{\mathbb K^N}\dots\int_{\mathbb
K^N}\|T(z^{(1)},\dots,z^{(m)})\|_X^p \, d\mu^N_{r'}(z^{(1)})\dots d\mu^N_{r'}(z^{(m)})\right)^{1/p},
$$
that is, the multiple $p$-summing and the $L_{r'}(\mathbb K^N\times\dots\times\mathbb
K^N,\mu_{r'}^N\times\dots\times\mu_{r'}^N)$ norm are equivalent in $\mathcal
L(^m\ell_r^N;X)$, with constants which are independent of $N$.

($ii$) If $r=2$ or $p<r'<2$ then we have, for any Banach space $X$,
$$
\pi_p(T)\succeq \left(\int_{\mathbb K^N}\dots\int_{\mathbb
K^N}\|T(z^{(1)},\dots,z^{(m)})\|_X^p \, d\mu^N_{r'}(z^{(1)})\dots d\mu^N_{r'}(z^{(m)})\right)^{1/p}.
$$
\end{proposition}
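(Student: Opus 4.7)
\textbf{Plan for Proposition~\ref{normas equivalentes}.}

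The strategy is to establish part (ii) first by a probabilistic/Monte Carlo argument, and then derive part (i) by combining (ii) with a factorization through $\ell_q^m$. For part (ii), the plan is to test the definition of $\pi_p(T)$ against i.i.d.\ stable random samples. Concretely, for $K \in \mathbb{N}$ and independent copies $Z^{(i)}_k \sim \mu^N_{r'}$ ($i = 1, \dots, m$, $k = 1, \dots, K$), the definition of multiple $p$-summing gives
$$
\sum_{k_1, \dots, k_m} \|T(Z^{(1)}_{k_1}, \dots, Z^{(m)}_{k_m})\|_X^p \le \pi_p(T)^p \prod_{i=1}^{m} w_p\bigl((Z^{(i)}_k)_{k=1}^K\bigr)^p.
$$
Dividing both sides by $K^m$ and letting $K \to \infty$: the left-hand side converges almost surely to $\int \|T(z^{(1)},\dots,z^{(m)})\|_X^p \, d\mu^N_{r'}(z^{(1)})\cdots d\mu^N_{r'}(z^{(m)})$ by a multi-indexed SLLN (integrability is guaranteed under the hypothesis $r=2$ or $p < r' < 2$). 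For the right-hand side, one writes $K^{-1} w_p((Z^{(i)}_k)_k)^p = \sup_{\alpha \in B_{\ell_{r'}^N}} K^{-1} \sum_k |\langle \alpha, Z^{(i)}_k\rangle|^p$; by \eqref{estable} the pointwise a.s.\ limit equals $c_{r',p}^p \|\alpha\|_{r'}^p$, whose maximum over the unit ball is $c_{r',p}^p$. Upgrading pointwise to uniform a.s.\ convergence on the finite-dimensional compact $B_{\ell_{r'}^N}$ (via an $\varepsilon$-net and equicontinuity/convexity of the approximating functionals) yields $K^{-1} w_p((Z^{(i)}_k)_k)^p \to c_{r',p}^p$ a.s., and passing to the limit in the inequality gives the desired bound $(\int \|T\|^p d\mu)^{1/p} \le c_{r',p}^m \pi_p(T)$.

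For part (i), the direction $\pi_p(T) \succeq (\int \|T\|^p d\mu)^{1/p}$ is exactly (ii). For the reverse, the image of $T$ lies in a finite-dimensional subspace $M \subseteq X$ (since it sits in the span of $T(e_{j_1}, \dots, e_{j_m})$). Given $\varepsilon > 0$, the $\mathcal{L}_{q,\lambda}^g$-property produces a factorization $I_M^X = S \circ R$ with $R : M \to \ell_q^m$, $S : \ell_q^m \to X$, and $\|R\|\|S\| \le \lambda + \varepsilon$. Then $T = S \circ (R \circ T_M)$, and the ideal property of $\pi_p$ together with Theorem~\ref{formulita} applied to $R \circ T_M : \ell_r^N \times \cdots \times \ell_r^N \to \ell_q^m$ (which is $\mathcal{L}_{q,1}^g$) gives
$$
\pi_p(T) \le \|S\| \pi_p(R T_M) = \frac{\|S\|}{c_{r',p}^m} \Bigl(\int \|R T(z)\|_{\ell_q^m}^p \, d\mu\Bigr)^{1/p} \le \frac{\|S\|\|R\|}{c_{r',p}^m}\Bigl(\int \|T(z)\|_X^p \, d\mu\Bigr)^{1/p},
$$
which, after letting $\varepsilon \to 0$, produces the constant $\lambda/c_{r',p}^m$ (independent of $N$).

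The main technical obstacles lie in two places. First, in part (ii), the uniform law of large numbers for $\alpha \mapsto K^{-1}\sum_k|\langle \alpha, Z_k\rangle|^p$ over $B_{\ell_{r'}^N}$ must be handled carefully (the argument uses finite-dimensional compactness, but integrability is delicate for $p$ close to $r'$). Second, in part (i), one must verify case-by-case that the hypotheses on $(r,q,p)$ listed in the Proposition actually permit Theorem~\ref{formulita} to be applied to the intermediate operator $R T_M$; the cases where the Proposition allows $q < p$ (outside Theorem~\ref{formulita}'s direct reach) can be addressed by routing through the contractive inclusion $\ell_q^m \hookrightarrow \ell_p^m$, provided one checks that only the dimension-\emph{independent} constants $\|R\|, \|S\| \le \lambda + \varepsilon$ enter the final estimate.
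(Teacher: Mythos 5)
Your part (ii) is fine: the Monte Carlo/SLLN argument (testing the summing inequality on i.i.d.\ $\mu^N_{r'}$-samples and using \eqref{estable} plus a uniform law of large numbers on the compact ball $B_{\ell_{r'}^N}$) is a legitimate, self-contained way to get $\big(\int\|T\|_X^p\,d\mu^{\otimes m}\big)^{1/p}\le c_{r',p}^m\,\pi_p(T)$, and it recovers the same constant. The paper gets this in one line (Lemma~\ref{formulita desig facil}) by citing P\'erez-Garc\'{\i}a's integral domination (Proposition~\ref{perez}) with $f_j(z)=z$ and rotation invariance of $\mu_{r'}$; your route essentially reproves that special case, at the cost of the ULLN and integrability checks you already flagged.

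Part (i), however, has a genuine gap exactly where you suspected: the sub-case $q<p$. Your factorization $T=S\circ(R\circ T_M)$ through $\ell_q^m$ plus Theorem~\ref{formulita} works only when $p\le q$ (conditions (a)--(c) of that theorem). For $q<p$ you propose to route through the contractive inclusion $\iota:\ell_q^m\hookrightarrow\ell_p^m$, but this cannot close the argument: Theorem~\ref{formulita} then controls $\pi_p(\iota\circ R T_M)$, and to recover $\pi_p(R T_M)$ (or to rebuild $T$ as $S\circ\iota^{-1}\circ(\iota\circ RT_M)$) you must pay $\|\iota^{-1}:\ell_p^m\to\ell_q^m\|=m^{1/q-1/p}$, where $m$ is the dimension of the factorization space --- which is not controlled by the $\mathcal L_{q,\lambda}^g$ definition and in general grows with $N$ and $\varepsilon$. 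So the ``only $\|R\|,\|S\|$ enter'' check you defer is precisely the step that fails. The paper handles $q\le p$ by a different mechanism that you would need to import: it exploits the \emph{lattice} structure of $\ell_q^m$, namely its $q$-concavity (with constant $1$), via Lemma~\ref{lema1} ($\pi_s(T)\le C\|(\pi_s(T_j))_j\|_X$ coordinatewise) and Lemma~\ref{lema2} (the scalar formula of Theorem~\ref{formulita escalar} applied to each coordinate $T_j$, plus the stable Khintchine-type Remark~\ref{khintchine estable}), yielding Corollary~\ref{formulita q-concave} and then the result by localization. Without some such coordinatewise argument (or an inclusion $\pi_p\le\pi_q$ valid with dimension-free constants in this range, which you do not establish), your proof covers only the $p\le q$ portion of the stated hypotheses.
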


Now we describe some applications of these results.  The most direct one is an asymptotically correct
relationship between the multiple summing norm of a multilinear operator and the usual (supremum) norm.
Cobos, K\"uhn and Peetre \cite{CobKuhPee99} compared the Hilbert-Schmidt norm, $\pi_2$, with the usual norm of
multilinear forms. They showed that if $T$ is any $m$-linear form in $\mathcal L(^m\ell_2^N,\mathbb K)$ then
$$
\pi_2(T)\le N^{\frac{m-1}{2}}\|T\|.
$$
Moreover, the asymptotic bound is optimal in the sense that there exist constants $c_m$ and $m$-linear forms
$T$ on $\ell_2^N$ with $\|T\|=1$ and $\pi_2(T)\ge c_mN^{\frac{m-1}{2}}$. It is easy to see from this that the
correct exponent for the asymptotic bound for the Hilbert-valued case is $\frac{m}{2}$. The same holds for the
multiple $p$-summing norm for any $p$ because  all those norms are equivalent to the Hilbert-Schmidt norm in
$\mathcal L(^m\ell_2;\ell_2)$, see \cite{Mat03,Per04}. We see now that the same optimal exponent holds for multiple $p$-summing operators with values on $\mathcal L_q^g$-spaces.

First, note that passing to polar coordinates we have, in the complex case (the real case follows similarly)
\begin{eqnarray*}
%\|T\|_{L_p(\mu_{2},X)}^p& =
& &\hspace{-5pt} \int_{\mathbb K^N}\dots\int_{\mathbb
K^N}\|T(z^{(1)},\dots,z^{(m)})\|_X^p \, d\mu^N_2(z^{(1)})\dots
d\mu^N_2(z^{(m)})  \\
& =&\hspace{-5pt} \frac{1}{\Gamma(N)^m}\int_{(
S^{2N-1})^m} \hspace{-5pt}\|T(\omega^{(1)},\dots,\omega^{(m)})\|_X^p \, d\sigma_{2N-1}(\omega^{(1)})\dots
d\sigma_{2N-1}(\omega^{(m)})\, \Big(\int_{0}^\infty 2\rho^{2N+p-1}e^{-\rho^2}d\rho\Big)^m\\
&\le &\hspace{-5pt} \|T\|^p\Big(\frac{\Gamma(N+p/2)}{\Gamma(N)}\Big)^{m},
\end{eqnarray*}
where $S^{2N-1}$ denotes the unit sphere in $\mathbb R^{2N}$ and $\sigma_{2N-1}$ the normalized Lebesgue
measure defined on it.

As a consequence of Proposition~\ref{normas equivalentes}, we obtain
\begin{equation}\label{desigualdad entre normas}
 \pi_p(T) \preceq \left(\frac{\Gamma(N+p/2)}{\Gamma(N)}\right)^{m/p}  \|T\| \preceq
N^{\frac{m}{2}}\|T\|
\end{equation}
for  $X$ a $\mathcal L_{q,\lambda}^g$-space and $p\ge q$ or $p,q<2$.

 Let us see that for $p,q\le 2$, the exponents are optimal. Since for any $T\in\mathcal
L(^m\ell_2^N;\ell_q)$ we have
$$
\Big(\sum_{j_1,\dots,j_m=1}^N\|T(e_{j_1},\dots,e_{j_m})\|_{\ell_q}
^p\Big) ^ { \frac1 {p}}\le \pi_p(T)N^{\frac{m}{p}-\frac{m}{2}}\preceq N^{\frac{m}{p}}\|T\|,
$$
it suffices to show that the inequality
\begin{equation}\label{triangular}
\Big(\sum_{j_1,\dots,j_m=1}^N\|T(e_{j_1},\dots,e_{j_m})\|_{\ell_q}
^p\Big) ^ { \frac1 {p}}\preceq N^{\frac{m}{p}}\|T\|
\end{equation}
 is optimal.
By \cite[Theorem 4]{Boa00}, there exist symmetric multilinear operators $\tilde T_N\in\mathcal
L(^m\ell_2^N,\ell_2^N)=\mathcal
L(^{m+1}\ell_2^N)$, such that,
$\displaystyle \tilde
T_N=\sum_{j_1,\dots,j_{m+1=1}}^N\varepsilon_{j_1,\dots,j_{m+1}}e_{j_1}\otimes\dots\otimes e_{j_{m+1}}$,
with $\varepsilon_{j_1,\dots,j_{m+1}}=\pm 1$ and $\|\tilde T_N\|\asymp \sqrt{N}$.

Let $T_N=i_{2q}\circ \tilde T_N$, where $i_{2q}:\ell_2^N\to\ell_q^N$ is the inclusion.
Then, $\displaystyle \|T_N\|\preceq N^\frac1{q}$ and
$$
\displaystyle\Big(\sum_{j_1,\dots,j_m=1}^N\|T_N(e_{j_1},\dots,e_{j_m})\|_{\ell_q}
^p\Big)^ { \frac1 {p}}N^{\frac1{q}+\frac{m}{p}}\succeq N^{\frac{m}{p}}\|T_N\|.
$$
This implies that inequality (\ref{triangular}) is optimal and, hence, so is \eqref{desigualdad entre normas}.

\subsection{Inclusion theorems}

The well-known inclusion theorem for absolutely summing linear operators states that for any Banach spaces
$E,F$ we have
$$
\Pi_s(E,F)\subset\Pi_t(E,F),\quad \textrm{ when }s\le t.
$$
Although multiple summing mappings share several properties of linear summing operators, there is no
general inclusion theorem in the multilinear case (see \cite{PerVil04}).
It is therefore interesting to investigate in which situations we do have inclusion type theorems.
The following theorem summarizes some of the most important known results on this topic.

\begin{theorem}[\cite{BotMicPel10,Per04,Pop09}]
($i$)  If $E$ has cotype $r\ge2$ then
$$
\Pi_s(^mE,F)=\Pi_1(^mE,F),\quad \textrm{ for }1\le s<r^*.
$$
($ii$) If $F$ has cotype $2$ then
$$
\Pi_s(^mE,F)\subset\Pi_2(^mE,F),\quad \textrm{ for }2\le s <\infty.
$$
\end{theorem}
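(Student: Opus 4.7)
Both items are non-trivial inclusions in the multilinear setting, where monotonicity in the summing exponent does not hold in general. The common strategy is to combine the multilinear Pietsch domination theorem for multiple summing operators with the cotype hypothesis via a Khintchine--Kahane type inequality. Part~(ii) admits the cleaner proof, which I outline first.

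For part (ii), let $T\in\Pi_s({}^mE,F)$ with $s\ge 2$ and fix finite sequences $(x^i_{j_i})_{j_i}\subset E$. Iterate the cotype-$2$ inequality of $F$ one index at a time, using product Rademacher variables $\varepsilon_{\vec j}=\prod_{i=1}^m\varepsilon^{(i)}_{j_i}$. Pulling each Rademacher expectation outside the next $\ell^2$-sum by Minkowski's integral inequality, and using multilinearity to identify $\sum_{\vec j}\varepsilon_{\vec j}T(\vec x_{\vec j})$ with $T(\sum_{j_1}\varepsilon^{(1)}_{j_1}x^1_{j_1},\dots,\sum_{j_m}\varepsilon^{(m)}_{j_m}x^m_{j_m})$, yields after $m$ steps
\[
\Bigl(\sum_{\vec j}\|T(\vec x_{\vec j})\|_F^2\Bigr)^{1/2}\le C_2(F)^m\,\mathbb{E}\Bigl\|T\Bigl(\sum_{j_1}\varepsilon^{(1)}_{j_1}x^1_{j_1},\dots,\sum_{j_m}\varepsilon^{(m)}_{j_m}x^m_{j_m}\Bigr)\Bigr\|_F.
\]
Apply Pietsch domination for $\Pi_s$ realization-wise, take $s$-th powers, swap expectation with the Pietsch integrals by Fubini (independence of the $\varepsilon^{(i)}$), invoke the scalar Khintchine inequality $\mathbb{E}|\sum_j\varepsilon_j a_j|^s\le K_s(\sum_j a_j^2)^{s/2}$, and use $\sum_{j_i}|\gamma_i(x^i_{j_i})|^2\le w_2((x^i_{j_i}))^2$ for $\gamma_i\in B_{E^*}$. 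Jensen's inequality (since $s\ge 1$) finally gives $\mathbb{E}\|T(\dots)\|\le(\mathbb{E}\|T(\dots)\|^s)^{1/s}\le C\,\pi_s(T)\prod_i w_2((x^i_{j_i}))$, so $\pi_2(T)\le C(F,m,s)\,\pi_s(T)$.

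For part (i), the inclusion $\Pi_1\subseteq\Pi_s$ holds without cotype: from Pietsch domination for $\Pi_1$ and Jensen, $\|T(x^1,\dots,x^m)\|^s\le \pi_1(T)^s\prod_i\int|\gamma_i(x^i)|^s\,d\mu_i$, which upon summing over the multi-index and applying $\sum_{j_i}|\gamma_i(x^i_{j_i})|^s\le w_s((x^i_{j_i}))^s$ yields $\pi_s(T)\le\pi_1(T)$. The reverse inclusion $\Pi_s\subseteq\Pi_1$ exploits cotype $r$ of $E$: starting from Pietsch for $\Pi_s$, $\|T(\vec x)\|\le\pi_s(T)\prod_i\|\iota_i(x^i)\|_{L^s(\mu_i)}$ where $\iota_i\colon E\to L^s(\mu_i)$ is the canonical embedding, one needs the key estimate $\sum_{j_i}\|\iota_i(x^i_{j_i})\|_{L^s(\mu_i)}\le C\,w_1((x^i_{j_i}))$. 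This is a Maurey-style extrapolation available under cotype $r$ when $s<r^*$, which one derives from a Bennett--Carl-type inclusion $\ell_1^{w}(E)\hookrightarrow\ell_r(E)$ (valid in cotype-$r$ spaces and obtained by combining cotype with the contraction principle) together with a variable-by-variable reduction.

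\textbf{Main obstacle.} The key technical point in part (ii) is extending the cotype-$2$ inequality of $F$ from iid to $m$-fold product Rademacher (Walsh) chaos, which is handled by iterating the cotype inequality one variable at a time together with Minkowski, at the cost of a factor $C_2(F)^m$. In part (i), the main obstacle is to extract a dimension-independent 1-summing estimate for the Pietsch embeddings $\iota_i\colon E\to L^s(\mu_i)$ from the cotype-$r$ hypothesis on $E$: a direct H\"older bound carries an unwanted dimension factor, so one must invoke a Maurey-type extrapolation tailored to cotype-$r$ spaces in the range $s<r^*$, and align the iteration carefully so that the accumulated constant stays independent of the cardinalities of the sequences.
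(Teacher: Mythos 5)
First, note that the paper does not prove this theorem: it is quoted from \cite{BotMicPel10,Per04,Pop09} as known background. Your proposal contains a fatal gap that affects both parts: you repeatedly invoke a ``Pietsch domination theorem for multiple summing operators,'' i.e.\ a pointwise estimate $\|T(x^1,\dots,x^m)\|\le C\prod_{i=1}^m\big(\int_{B_{E_i'}}|\gamma(x^i)|^p\,d\mu_i(\gamma)\big)^{1/p}$ for $T\in\Pi_p$. No such theorem exists: that domination characterizes the strictly smaller class of $p$-dominated multilinear operators. If every multiple $1$-summing operator admitted it, then summing the dominated estimate over the product grid and applying H\"older would give $\pi_s(T)\le\pi_1(T)$ for all $s\ge1$ and all Banach spaces --- precisely the general inclusion theorem whose failure is the starting point of this whole circle of results (the paper states this failure explicitly; the standard counterexample is that by the multilinear Grothendieck theorem every bilinear form on $C(K)\times C(K)$ is multiple $1$-summing, while not all of them are multiple $2$-summing). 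Consequently your ``easy'' inclusion $\Pi_1\subseteq\Pi_s$ ``without cotype'' in part (i) is false as stated for $s\ge2$, and the same nonexistent domination underlies the rest of part (i), including the Maurey-type extrapolation applied to the Pietsch embeddings $\iota_i$, which therefore have no reason to exist in this setting.

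Part (ii) is closer to a correct proof: its skeleton (iterate the cotype-$2$ inequality of $F$ one index at a time with Minkowski, collapse the product Rademacher chaos into $T$ evaluated at Rademacher sums by multilinearity, use a multilinear Kahane inequality, finish with scalar Khintchine) is the right one. But the step ``apply Pietsch domination for $\Pi_s$ realization-wise'' must be replaced by the integrated inequality of P\'erez-Garc\'{\i}a (Proposition~\ref{perez} of the paper), which bounds $\big(\int\|T(f_1(w_1),\dots,f_m(w_m))\|^s\,d\mu_1\cdots d\mu_m\big)^{1/s}$ by $\pi_s(T)\prod_j\sup_{x_j^*}\big(\int|\langle x_j^*,f_j\rangle|^s\big)^{1/s}$ directly, with no pointwise factorization; with that substitution part (ii) can be completed. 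Part (i) cannot be repaired the same way: the inclusion $\Pi_1\subset\Pi_s$ for $s<2$ is itself P\'erez-Garc\'{\i}a's inclusion theorem \cite{Per04}, and the reverse inclusion under cotype $r$ of $E$ is obtained in \cite{BotMicPel10} by complex interpolation; neither follows from a domination-based scheme.
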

The following picture illustrates the above theorem in the particular case where $E=\ell_2$ and $F=\ell_q$,
\[
\begin{pspicture}(3,3)
\pspolygon[linecolor=green!60!red!60,
fillstyle=hlines](1.5,0)(3,0)(3,3)(1.5,3)
\pspolygon[linecolor=black!60!pink!60,fillcolor=black!60!pink!60,
fillstyle=solid](1.5,3)(0,3)(0,1.5)(1.5,1.5)
\psline(0,-0.2)(0,3.2)\psline(-0.2,0)(3.3,0)
\rput[l](3.4,0){$\frac1{p}$}
\rput[d](0.2,3.2){$\frac1{q}$}
\rput[u](3,-0.2){$1$}
\rput[r](0,3){$1$}
\rput[u](1.5,-0.3){$\frac1{2}$}
\rput[r](-0.1,1.5){$\frac1{2}$}
\psline[linestyle=dashed,linewidth=0.7pt](1.5,0)(1.5,3)
\psline[linewidth=0.7pt,linecolor=green!60!red!60](0.05,1.5)(2.95,1.5)
\end{pspicture}
\]
$\smallskip$

In the ruled area we have $\Pi_{p_1}(^m\ell_2;\ell_q)=\Pi_{p_2}(^m\ell_2;\ell_q)$ and in the shaded area
we have the reverse inclusion $\Pi_{p_1}(^m\ell_2;\ell_q)\subset\Pi_{2}(^m\ell_2;\ell_q)$ for
$p_1\ge 2$.

As a consequence of our integral formula, we obtain the following improvement to the previous result, which will be proved in Section \ref{sec-proofs}.

\begin{proposition}\label{prop inclusion} Let $Y$ be a $\mathcal L_2^g$-space and $X$ a $\mathcal L_q^g$-space.

If $p\ge q$, then $\Pi_p(^mY;X)=\Pi_q(^mY;X)$.

If $p\le q$, then $\Pi_p(^mY;X)\subset \Pi_q(^mY;X)$.
\end{proposition}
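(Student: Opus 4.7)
My plan is to combine the integral formulas of Proposition~\ref{normas equivalentes} with a Kahane-type inequality for $X$-valued $m$-homogeneous Gaussian chaos. First I would perform the standard local reduction to the case $Y=\ell_2^N$: since the multiple $p$-summing norm is determined by its action on finite test tuples, and $Y$ is an $\mathcal L_{2,\lambda}^g$-space, any such tuple lies in a finite-dimensional subspace that factors as $M\overset{R}{\to}\ell_2^N\overset{S}{\to}Y$ with $\|R\|\|S\|\le\lambda+\varepsilon$. Replacing $T$ by $\tilde T=T\circ(S\times\cdots\times S)$ and transferring weak-summing norms through $R$ (using that $w_p(Rx_j)\le\|R\|w_p(x_j)$ and the ideal property of $\pi_p$ in each linear slot), the problem reduces, with a uniform multiplicative loss of $\lambda^m$, to comparing $\pi_p$ and $\pi_q$ for $T\in\mathcal L(^m\ell_2^N;X)$ with constants independent of $N$.

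For such $T$, applying Proposition~\ref{normas equivalentes}(i) with the summing exponent replaced by $q$, the second bullet ``$r=2$ and $q\le p$'' becomes the tautology $q\le q$ and thus produces
$$
\pi_q(T)\asymp \Big(\int_{\mathbb K^N}\cdots\int_{\mathbb K^N}\|T(z^{(1)},\dots,z^{(m)})\|_X^q\, d\mu_2^N(z^{(1)})\cdots d\mu_2^N(z^{(m)})\Big)^{1/q}
$$
uniformly in $N$. In parallel, Proposition~\ref{normas equivalentes}(ii) unconditionally supplies the lower bound $\pi_p(T)\succeq \|T\|_{L_p(\mu_2^N\otimes\cdots\otimes\mu_2^N;X)}$. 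The decisive step is the Banach-valued Kahane inequality: viewing $(z^{(1)},\dots,z^{(m)})\mapsto T(z^{(1)},\dots,z^{(m)})$ as an $X$-valued $m$-homogeneous polynomial chaos in independent Gaussian variables, an iteration of the classical Kahane--Khintchine inequality (applied one variable at a time, each time enlarging the target to a suitable Bochner $L^p(\mu;X)$ so that Fubini and the induction carry through) gives
$$
\|T\|_{L_p(\mu_2^N\otimes\cdots\otimes\mu_2^N;X)}\asymp \|T\|_{L_q(\mu_2^N\otimes\cdots\otimes\mu_2^N;X)}
$$
with constants depending only on $p,q,m$.

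Combining the pieces: when $p\le q$ we obtain $\pi_q(T)\asymp\|T\|_{L_q}\asymp\|T\|_{L_p}\preceq \pi_p(T)$, yielding $\Pi_p\subset\Pi_q$. When $p\ge q$, the hypothesis $q\le p$ activates the second bullet of Proposition~\ref{normas equivalentes}(i) for $\pi_p$ itself, giving $\pi_p(T)\asymp \|T\|_{L_p}$, and chaining these equivalences produces $\pi_p(T)\asymp\pi_q(T)$, hence $\Pi_p=\Pi_q$. The main obstacle is verifying that the Kahane comparison for $m$-homogeneous Gaussian chaos transfers to an arbitrary Banach-valued setting with constants independent of both the dimension $N$ and the space $X$; this is a well-known consequence of the variable-by-variable iteration sketched above, which reduces each step to the scalar (or Banach-valued) linear Kahane--Khintchine inequality.
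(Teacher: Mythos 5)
Your argument is correct, but it reaches the proposition by a genuinely different route than the paper. The paper's proof is a two-line deduction from its coordinate-wise machinery: the coincidence for $p\ge q$ follows from Corollary~\ref{formulita q-concave} together with localization --- the point being that, via the $q$-concavity of $\ell_q$, Lemma~\ref{lema1}, Lemma~\ref{lema2} and the scalar Khintchine-type Remark~\ref{khintchine estable}, \emph{every} $\pi_s$ with $s\ge q$ is equivalent to the single quantity $\big(\int\cdots\int\|T\|_X^q\,d\mu_{r'}^N\cdots d\mu_{r'}^N\big)^{1/q}$, so no comparison between different $L_s$ norms is ever needed --- while the inclusion for $p\le q$ comes from chaining Lemma~\ref{lema1} with Lemma~\ref{lema2}: $\pi_q(T)\le C\|(\pi_q(T_j))_j\|_X\preceq \pi_p(T)$, the $L_p$/$L_q$ comparison being performed only on the scalar coordinates $T_j$. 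You instead keep the two integral norms $\|T\|_{L_p}$ and $\|T\|_{L_q}$ distinct, extract $\pi_q(T)\asymp\|T\|_{L_q}$ and $\pi_p(T)\succeq\|T\|_{L_p}$ (respectively $\asymp$ when $q\le p$) from Proposition~\ref{normas equivalentes}, and bridge them with a Banach-valued Kahane inequality for decoupled $m$-homogeneous Gaussian chaos. That bridge is legitimate: the variable-by-variable iteration into Bochner targets $L_q(\mu;X)$ that you sketch is the standard proof, with constants depending only on $p$, $q$ and $m$, and the paper itself invokes the Rademacher analogue in the proof of Theorem~\ref{contraction}. What your route buys is a cleaner final deduction and the explicit insight that inclusion/coincidence reduces to moment comparison for the chaos; what the paper's route buys is economy of means, needing only the scalar stable-measure Khintchine inequality plus the lattice structure of $\ell_q$. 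Since Proposition~\ref{normas equivalentes}($i$) in the regime $q\le p$ is itself proved from Corollary~\ref{formulita q-concave}, the two arguments share their foundations, but your deduction from the stated results is a different and valid one.
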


With the information given by the above proposition, we have the following new picture.
\medskip
\[
\begin{pspicture}(3,3)
\pspolygon[linecolor=green!60!red!60,
fillstyle=hlines](1.5,0)(3,0)(3,3)(0,3)(0,0)(1.5,1.5)
\pspolygon[linecolor=black!60!pink!60,fillcolor=black!60!pink!60,
fillstyle=solid](1.5,1.5)(0,0)(1.5,0)
\psline(0,-0.2)(0,3.2)\psline(-0.2,0)(3.2,0)
\rput[l](3.4,0){$\frac1{p}$}
\rput[d](0.2,3.2){$\frac1{q}$}
\rput[u](3,-0.2){$1$}
\rput[r](0,3){$1$}
\rput[u](1.5,-0.3){$\frac1{2}$}
\rput[r](-0.1,1.5){$\frac1{2}$}
\psline[linestyle=dashed,linewidth=0.7pt](1.5,0)(1.5,1.5)
\end{pspicture}
\]
$\smallskip$
\textsl{}
In the ruled area we have $\Pi_{p_1}(^m\ell_2;\ell_q)=\Pi_{p_2}(^m\ell_2;\ell_q)$ and in the shaded
area
we have the (direct) inclusion $\Pi_{p}(^m\ell_2;\ell_q)\subset\Pi_{q}(^m\ell_2;\ell_q)$ for
$p\le q$.

\subsection{A contraction result and unconditionality}

Let us begin with this contraction result for the $p$-summing norm of multilinear operators.

\begin{theorem}\label{contraction}
Suppose $X$ is a $\mathcal L_q^g$-space and let $r$, $q$ and $p>0$ satisfy  one of the conditions in
Proposition~\ref{normas equivalentes} ($i$). Then, there is a constant $K$ (depending on $r$, $q$ and $p$),
such that for any finite matrix $(x_{i_1,\dots,i_m})_{i_1,\dots,i_m}\subset X$ and any choice of scalars
$\alpha_{i_1,\dots,i_m}$ we have,
$$
\pi_p\left( \sum _{i_1,\dots,i_m} \alpha_{i_1,\dots,i_m}\  e_{i_1}'\otimes\cdots\otimes e_{i_m}' \
x_{i_1,\dots,i_m} \right) \le K \|(\alpha_{i_1,\dots,i_m})\|_\infty\ \pi_p\left( \sum _{i_1,\dots,i_m}
e_{i_1}' \otimes\cdots\otimes e_{i_m}' \ x_{i_1,\dots,i_m} \right),
$$
where the $\pi_p$ norms are taken in $\Pi_p(^m\ell_r;X)$.
\end{theorem}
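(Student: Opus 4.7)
The plan is to invoke the integral characterization of the multiple $p$-summing norm given by Proposition~\ref{normas equivalentes}($i$) to translate the desired contraction between $\pi_p$ norms into one between $L_p$-integrals against the product $r'$-stable measure, and then to exploit the sign-symmetry of the stable marginals to reduce the problem to a contraction principle for vector-valued Rademacher chaos of order $m$.

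Set, for $S\in\mathcal L(^m\ell_r^N;X)$,
\[
\|S\|_{\mathrm{int}}:=\left(\int_{\mathbb K^N}\cdots\int_{\mathbb K^N}\|S(z^{(1)},\dots,z^{(m)})\|_X^p\,d\mu_{r'}^N(z^{(1)})\cdots d\mu_{r'}^N(z^{(m)})\right)^{1/p}.
\]
Proposition~\ref{normas equivalentes}($i$) supplies constants $A,B>0$, depending only on $r,q,p$ and the $\mathcal L_q^g$-constant of $X$, with $A\,\|S\|_{\mathrm{int}}\le\pi_p(S)\le B\,\|S\|_{\mathrm{int}}$; applying this with $S=T$ and $S=T_\alpha$, the theorem reduces to proving
\[
\|T_\alpha\|_{\mathrm{int}}\le K_0\,\|\alpha\|_\infty\,\|T\|_{\mathrm{int}}
\]
for some $K_0=K_0(r,q,p,m,X)$. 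For the symmetrization step, I would introduce i.i.d.\ Rademacher variables $(\varepsilon^{(k)}_{i_k})_{k,i_k}$, independent of i.i.d.\ $r'$-stable vectors $Z^{(1)},\dots,Z^{(m)}\in\mathbb K^N$. Since each one-dimensional marginal of $\mu_{r'}$ is symmetric, $(Z^{(k)}_{i_k})$ and $(\varepsilon^{(k)}_{i_k}Z^{(k)}_{i_k})$ have the same joint distribution, and so
\[
\|T_\alpha\|_{\mathrm{int}}^p=\mathbb E_Z\,\mathbb E_\varepsilon\Big\|\sum_{\mathbf i}\alpha_{\mathbf i}\,y_{\mathbf i}(Z)\prod_{k=1}^m\varepsilon^{(k)}_{i_k}\Big\|_X^p,\qquad y_{\mathbf i}(Z):=x_{\mathbf i}\,Z^{(1)}_{i_1}\cdots Z^{(m)}_{i_m},
\]
together with the analogous identity (omitting the $\alpha_{\mathbf i}$'s) for $\|T\|_{\mathrm{int}}^p$.

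For each fixed $Z$ the inner expectation is a Rademacher chaos of order $m$ taking values in $X$, and the main obstacle is to prove the contraction principle for such chaoses: to exhibit a constant $K_1=K_1(m,p,X)$ with
\[
\mathbb E_\varepsilon\Big\|\sum_{\mathbf i}\alpha_{\mathbf i}u_{\mathbf i}\prod_{k=1}^m\varepsilon^{(k)}_{i_k}\Big\|_X^p\le K_1\,\|\alpha\|_\infty^p\,\mathbb E_\varepsilon\Big\|\sum_{\mathbf i}u_{\mathbf i}\prod_{k=1}^m\varepsilon^{(k)}_{i_k}\Big\|_X^p
\]
for every finite family $(u_{\mathbf i})\subset X$. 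The difficulty is that the products $\prod_k\varepsilon^{(k)}_{i_k}$ are \emph{not} jointly independent, so the classical scalar Kahane contraction does not apply in one stroke. Exploiting that $\mathcal L_q^g$-spaces are locally isomorphic to $\ell_q^n$ and have cotype $\max(q,2)$, I would reduce to $\ell_q^n$, argue coordinate-wise, and iterate the scalar Kahane contraction one Rademacher copy at a time, controlling the resulting coordinate sums through the Khintchine--Kahane equivalence between the $L^p$ and $L^q$ norms of Rademacher polynomials; the constant so produced depends only on $m,p,q$ and the $\mathcal L_q^g$-constant of $X$. Inserting this bound pointwise in $Z$ and feeding the result back through Proposition~\ref{normas equivalentes}($i$) yields the claimed inequality $\pi_p(T_\alpha)\le K\,\|\alpha\|_\infty\,\pi_p(T)$.
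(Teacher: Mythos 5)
Your overall architecture coincides with the paper's: both proofs use Proposition~\ref{normas equivalentes}($i$) to replace $\pi_p$ by the $L_p(\mu_{r'}^N\times\dots\times\mu_{r'}^N)$ norm, exploit the sign-invariance of the stable measures to insert independent Rademacher copies, establish a contraction principle for the resulting $X$-valued Rademacher chaos of order $m$, and pass between moments via a multilinear Kahane inequality before translating back. The genuine difference is in the key lemma. The paper handles the chaos contraction by invoking a multilinear version of Pisier's theorem \cite[Proposition 2.1]{Pis78}, which applies to any space with nontrivial cotype and local unconditional structure; you instead propose to localize to $\ell_q^n$ and argue coordinatewise. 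Your route is viable and arguably more elementary and self-contained for the $\mathcal L_q^g$ setting of the theorem, though it does not yield the more general statement Pisier's result would give. One caution on your phrasing: ``iterating the scalar Kahane contraction one Rademacher copy at a time'' does not literally work, because after freezing $\varepsilon^{(2)},\dots,\varepsilon^{(m)}$ the coefficient of $\varepsilon^{(1)}_{i_1}$ is a vector already contaminated by the $\alpha_{\mathbf i}$'s (which depend on the whole multi-index), so the one-parameter contraction principle cannot be applied in succession; this failure is precisely why the general chaos contraction is nontrivial. The correct elementary execution of your coordinatewise plan is to note that the monomials $\prod_{k}\varepsilon^{(k)}_{i_k}$ are orthonormal in $L^2$, so for each coordinate $j$ the $L^2$ norm of the scalar chaos equals $\bigl(\sum_{\mathbf i}|\alpha_{\mathbf i}v_{\mathbf i}(j)|^2\bigr)^{1/2}\le\|\alpha\|_\infty\bigl(\sum_{\mathbf i}|v_{\mathbf i}(j)|^2\bigr)^{1/2}$, and then the scalar multilinear Khintchine inequality converts this into an $L^q$ contraction coordinate by coordinate, with constants depending only on $m$ and $q$; summing over $j$, transferring through the factorization $M\to\ell_q^n\to X$, and applying the vector-valued multilinear Kahane inequality to change $q$ into $p$ completes the argument. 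With that correction your proof is sound and constitutes a legitimate alternative to the paper's appeal to Pisier.
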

\begin{proof} If we show the inequality  for $\alpha_{i_1,\dots,i_m}=\pm 1$, standard procedures lead to the
desired inequality for general scalars, eventually with different constants (see, for example, Section 1.6 in
\cite{DieJarTon95}).
We set
 $$T=\sum _{i_1,\dots,i_m}  e_{i_1}' \otimes\cdots\otimes e_{i_m}' \ x_{i_1,\dots,i_m} \quad \text{and}\quad T_\alpha=\sum _{i_1,\dots,i_m} \alpha_{i_1,\dots,i_m} e_{i_1}' \otimes\cdots\otimes e_{i_m}' \ x_{i_1,\dots,i_m}$$
and let $(r_k)_k$ be the sequence of Rademacher functions on $[0,1]$. For any choice of  $t_1\dots,t_m\in [0,1]$, we have
\begin{eqnarray*}
& &
\int_{\mathbb K^N}\dots\int_{\mathbb K^N}\|T_\alpha(r_1(t_1)z^{(1)},\dots,r_n(t_m)z^{(m)})\|_X^p
d\mu^N_{r'}(z^{(1)})\dots d\mu^N_{r'}(z^{(m)})  \\
&=& \int_{\mathbb K^N}\dots\int_{\mathbb K^N}\|T_\alpha(z^{(1)},\dots,z^{(m)})\|_X^p d\mu^N_{r'}(z^{(1)})\dots
d\mu^N_{r'}(z^{(m)}).\end{eqnarray*}
We integrate on $t_j\in [0,1]$, $j=1,\dots,m$ and use Fubini's theorem to obtain
\begin{eqnarray}
& & \int_{\mathbb K^N}\dots\int_{\mathbb K^N}\|T_\alpha(z^{(1)},\dots,z^{(m)})\|_X^p d\mu^N_{r'}(z^{(1)})\dots
d\mu^N_{r'}(z^{(m)}) \label{conalfa}\\
&=& \hspace{-8pt} \int_{\mathbb K^N}\dots\int_{\mathbb K^N} \int_0^1 \dots \int_0^1
\big\|T_\alpha(r_1(t_1)z^{(1)},\dots,r_n(t_m)z^{(m)})\big\|_X^p dt_1\dots dt_m d\mu^N_{r'}(z^{(1)})\dots
d\mu^N_{r'}(z^{(m)})\nonumber \\
&=& \hspace{-8pt}
\int_{[0;1]^m}\int_{(\mathbb {K^N})^m} \hspace{-3pt}
\big\|\hspace{-8pt} \sum_{i_1,\dots,i_m}\hspace{-6pt} r_{i_1}(t)\dots r_{i_m}(t) \alpha_{i_1,\dots,i_m} z^{(1)}_{i_1} \cdots
z^{(m)}_{i_m} x_{i_1,\dots,i_m} \big\|_X^p \hspace{-1pt} dt_1\ldots dt_m d\mu^N_{r'}(z^{(1)}\hspace{-1pt})\dots
d\mu^N_{r'}(z^{(m)}\hspace{-1pt}).\nonumber
\end{eqnarray}
Since $X$ has nontrivial cotype  and local unconditional structure, we can apply a multilinear version of Pisier's deep result \cite[Proposition 2.1]{Pis78} (which follows the same lines as the bilinear result) to show that, for any $z^{(1)},\dots,z^{(m)} \in \mathbb K^N$, we have
\begin{eqnarray*}
& &
\int_0^1 \dots \int_0^1
\big\| \sum _{i_1,\dots,i_m} r_{i_1}(t)\dots r_{i_m}(t)\ \alpha_{i_1,\dots,i_m}\  z^{(1)}_{i_1} \cdots  z^{(m)}_{i_m} \ x_{i_1,\dots,i_m}  \big\|_X^2 dt_1\dots dt_m \\
&\le & K_X \ \int_0^1 \dots \int_0^1
\big\| \sum _{i_1,\dots,i_m} r_{i_1}(t)\dots r_{i_m}(t)\ z^{(1)}_{i_1} \cdots  z^{(m)}_{i_m} \ x_{i_1,\dots,i_m}  \big\|_X^2 dt_1\dots dt_m
\end{eqnarray*}
Using a multilinear Kahane inequality (which may be proved by induction on $m$), the same holds, with
a different constant, if we consider the power $p$ in the integrals.
This means that we can take the $\alpha_{i_1,\dots,i_k}$ from \eqref{conalfa}, paying the price of a constant $K$. Now, we can go all the way back as before to obtain
\begin{eqnarray*} & &
\int_{\mathbb K^N}\dots\int_{\mathbb K^N}\|T_\alpha(z^{(1)},\dots,z^{(m)})\|_X^p d\mu^N_{r'}(z^{(1)})\dots
d\mu^N_{r'}(z^{(m)})\\ &\le & K \int_{\mathbb K^N}\dots\int_{\mathbb K^N}\|T(z^{(1)},\dots,z^{(m)})\|_X^p
d\mu^N_{r'}(z^{(1)})\dots d\mu^N_{r'}(z^{(m)}).
\end{eqnarray*}
The integral formula in Proposition~\ref{normas equivalentes} gives the result.
\end{proof}

Note that, in the scalar valued case, the previous theorem asserts that the monomials form an unconditional
basic sequence in $\Pi_1(^m\ell_r)$ for $r\ge 2$. This is a particular case of the result of Defant and
Pérez-García in \cite{DefPer08}. It should be noted that the analogous scalar valued result is much easier to
prove: after introducing the Rademacher functions as in the previous proof, we just have to use a multilinear
Khintchine inequality and the integral formula from Theorem~\ref{formulita escalar} to obtain the result
(Pisier's result is, of course, not needed in this case).

\subsection{Limit orders}

As a consequence of the integral formula for the $p$-summing norm, we are able to compute limit orders of
multiple summing operators (see definitions below). Limit orders of the ideal of scalar valued multiple 1-summing forms were computed
in \cite{DefPer08} for the bilinear case. In the multlinear case, they were computed in  \cite{CarDimSev07}
for  $\ell_r$ with $1\le r\le 2$ and in  \cite{Paltesis} for  $\ell_r$ with $r\ge 2$. This latter case can be
easily obtained from our integral formula for the multiple summing norm.
We will actually use the integral formula to compute some limit orders for the vector
valued case, the mentioned scalar case being very similar. 

A subclass $\mathfrak{A}$ of the class $\mathcal{L}$
of all $m$-linear continuous mappings between Banach spaces is called an {\it ideal
of $m$-linear mappings} if
\begin{enumerate}
 \item For all Banach spaces $E_1,\dots,E_m,F$, the component set $\mathfrak{A}(E_1,\dots,E_m;F):=\mathfrak{A}\cap\mathcal(E_1,\dots,E_m;F)$ is a linear subspace of $\mathcal(E_1,\dots,E_m;F)$.

 \item If $T_j\in\mathcal(E_j;G_j)$,  $\phi\in\mathfrak{A}(G_1,\dots,G_m;G)$ and $S\in\mathcal L(G,F)$, then $S\circ\phi\circ(T_1,\dots,T_m)$ belongs to $\mathfrak{A}(E_1,\dots,E_m;F)$.

 \item The application $\mathbb K^m\ni(\lambda_1,\dots,\lambda_m)\mapsto \lambda_1\cdot\ldots\cdot\lambda_m\in\mathbb K$ is in $\mathfrak{A}(\mathbb K,\dots,\mathbb K;\mathbb K)$.
\end{enumerate}
A {\it normed ideal of $m$-linear  operators}
$(\u,\|\cdot\|_{\u})$ is an ideal $\u$ of $m$-linear operators together with an ideal norm $\|\cdot\|_{\u}$, that is,
\begin{enumerate}
 \item  $\|\cdot\|_{\u}$ restricted to each component is a norm.

 \item If $T_j\in\mathcal(E_j;G_j)$,  $\phi\in\mathfrak{A}(G_1,\dots,G_m;G)$ and $S\in\mathcal L(G,F)$, then $\|S\circ\phi\circ(T_1,\dots,T_m)\|_{\u}\le\|S\|\|\phi\|_{\u}\|T_1\|\cdot\dots\cdot\|T_m\|$.

 \item $\|\mathbb K^m\ni(\lambda_1,\dots,\lambda_m)\mapsto \lambda_1\cdot\ldots\cdot\lambda_m\in\mathbb K\|_{\mathfrak{A}}=1$.
\end{enumerate}

Given a normed ideal of $m$-linear  operators
$(\u,\|\cdot\|_{\u})$, the {\it limit order}
$\lambda_m(\u,r,q)$ is defined as the infimum of all  $\lambda\ge0$ such that there is a constant $C>0$
satisfying
$$
\|\Phi_N\|_{\u}\le CN^\lambda,
$$
for every $N\ge1$, where $\Phi_N:\ell_r^N\times\dots\times\ell_r^N\to\ell_q^N$  is the $m$-linear operator,
$\Phi_N(x^1,\dots,x^m)=\sum_{j=1}^Nx^1_j\dots x^m_je_j$.
\begin{proposition}
$$
\lambda_m(\Pi_1,r,q)=\left\{\begin{array}{lll}
                      \frac1{q} &\textrm{ if }&q\le r'\le 2\\
		      \frac1{r'}&\textrm{ if }&r'\le q\le 2\\
		      \frac1{q}+\frac{m}{2}-\frac{m}{r}&\textrm{ if }&  \frac{2mq}{2+mq}< r\le 2\textrm{
and }q\le 2\\
		      0&\textrm{ if }&1\le r\le \frac{2mq}{2+mq}
                     \end{array}\right.
$$
\end{proposition}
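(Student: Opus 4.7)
I would split the analysis into the two regimes $r\ge 2$ (cases (i) and (ii)) and $r\le 2$ (cases (iii) and (iv)). For $r\ge 2$, the hypotheses of Theorem~\ref{formulita} hold with $p=1$ and $X=\ell_q^N$ (an $\mathcal L_{q,1}^g$-space since $q\le 2$); since $\mu_{r'}^N$ is the product of $N$ independent scalar $r'$-stable distributions, the formula becomes
\[
\pi_1(\Phi_N)=\frac{1}{c_{r',1}^m}\,\mathbb E\Big(\sum_{j=1}^N Y_j^q\Big)^{1/q},\qquad Y_j:=\prod_{i=1}^m|Z^{(i)}_j|,
\]
with $(Z^{(i)}_j)$ iid scalar $r'$-stable. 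The problem thus reduces to an asymptotic estimate of this expectation.

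For the upper bound in cases (i)--(ii), I would argue by cases on the finiteness of $\mathbb E|Z|^q$. If $q<r'$, this moment is finite, and Jensen's inequality (using concavity of $t^{1/q}$, valid since $q\ge 1$) yields $\mathbb E(\sum Y_j^q)^{1/q}\le(N\,\mathbb EY_1^q)^{1/q}\preceq N^{1/q}$. If $q>r'$, $\mathbb E|Z|^q=\infty$, but an order-statistics argument applies: iid positive variables with tail $\sim t^{-r'}$ have $k$-th order statistic $Y_{(k)}\asymp(N/k)^{1/r'}$, and $\sum_k Y_{(k)}^q\asymp N^{q/r'}\sum_k k^{-q/r'}$ with convergent series (since $q>r'$), so $\mathbb E(\sum Y_j^q)^{1/q}\preceq N^{1/r'}$. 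For the lower bound, convexity of $\|\cdot\|_{\ell_q}$ gives $\mathbb E\|\sum Y_je_j\|_{\ell_q}\ge\|\mathbb E\sum Y_je_j\|_{\ell_q}=(\mathbb E|Z|)^m N^{1/q}$ (the finiteness of $\mathbb E|Z|$ requires $r'>1$, i.e., $r<\infty$), and $\|\cdot\|_{\ell_q}\ge\|\cdot\|_{\ell_\infty}$ combined with the tail estimate $\mathbb P(Y_1>t)\succeq t^{-r'}$ yields $\mathbb E\max_j Y_j\succeq N^{1/r'}$. Putting these together, $\pi_1(\Phi_N)\asymp N^{\max(1/q,\,1/r')}$, which is $N^{1/q}$ in case (i) and $N^{1/r'}$ in case (ii).

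For $r\le 2$ the integral formula does not apply. The lower bound in case (iii) follows from the ideal property: let $\Phi_N^{(2)}:\ell_2^N\times\dots\times\ell_2^N\to\ell_q^N$ be the same diagonal $m$-linear map on $\ell_2^N$, so that $\Phi_N^{(2)}=\Phi_N\circ(J,\dots,J)$ with $J:\ell_2^N\to\ell_r^N$ the set-theoretic identity and $\|J\|=N^{1/r-1/2}$. Then
\[
\pi_1(\Phi_N)\ge\frac{\pi_1(\Phi_N^{(2)})}{\|J\|^m}\asymp\frac{N^{1/q}}{N^{m(1/r-1/2)}}=N^{1/q+m/2-m/r},
\]
using case (i) with $r=2$. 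The matching upper bound is the main obstacle: the naive dual factorization $\Phi_N=\Phi_N^{(2)}\circ(I,\dots,I)$ with $I:\ell_r^N\hookrightarrow\ell_2^N$ of norm $1$ only gives $\pi_1(\Phi_N)\preceq N^{1/q}$, strictly weaker than the claim when $r<2$. The sharp estimate should come from exploiting the diagonal tensor structure of $\Phi_N$ (for instance, combining Hölder on the coordinates with classical Pietsch/Maurey/Grothendieck estimates for linear 1-summing constants of inclusions $\iota:\ell_r^N\hookrightarrow\ell_s^N$), or from a careful decomposition through a space where Theorem~\ref{formulita} applies. Case (iv) ($r\le 2mq/(mq+2)$) is then the degenerate endpoint: the same analysis collapses to $\pi_1(\Phi_N)=O(1)$, a fact that in this regime of parameters follows from general multilinear coincidence theorems, so that $\lambda=0$.
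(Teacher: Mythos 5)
For $r\ge2$ your reduction to $\mathbb E\big(\sum_j Y_j^q\big)^{1/q}$ via Theorem~\ref{formulita} is the same starting point as the paper's, and your lower bounds (Jensen for $N^{1/q}$, the maximum for $N^{1/r'}$) work; the paper instead obtains the $N^{1/r'}$ lower bound by passing to the associated $(m+1)$-linear form and applying the scalar integral formula. Two caveats on your upper bound when $q>r'$: the order-statistics computation is only heuristic --- $\mathbb E\,Y_{(k)}^q=\infty$ for the top order statistics when $q>r'$, so you cannot sum expectations termwise, and for $m\ge2$ the product $Y_j=\prod_i|Z_j^{(i)}|$ has tail of order $t^{-r'}(\log t)^{m-1}$ rather than $t^{-r'}$. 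Both issues are repairable (and harmless for a limit order), and the paper sidesteps them with the cruder bound $\|\cdot\|_{\ell_q}\le\|\cdot\|_{\ell_s}$ for $s<r'$, which gives $\preceq N^{1/s}$ for every $s<r'$ and hence the exponent $1/r'$.

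The genuine gap is in the regime $r\le2$: you prove only the lower bound of case (iii) and explicitly leave the upper bounds of cases (iii) and (iv) as ``the main obstacle''. The missing idea is the localization lemma of Defant and P\'erez-Garc\'ia, recorded as \eqref{lemadeDP}: since $\ell_r$ has cotype $2$ for $1\le r\le2$, one has $\pi_1(\Phi_N)\asymp\sup\pi_1(\Phi_N\circ(D_{\sigma_1},\dots,D_{\sigma_m}))$ over norm-one diagonal operators $D_{\sigma_j}:\ell_2^N\to\ell_r^N$. Each composite is defined on $\ell_2^N$, so Theorem~\ref{formulita} applies to it; with $1/t=1/r-1/2$ one has $\|D_\sigma\|=\|\sigma\|_{\ell_t^N}$, and a H\"older estimate on $\sum_j|\sigma_1(j)z_j^{(1)}\cdots\sigma_m(j)z_j^{(m)}|^q$ yields $\preceq N^{1/s}$ with $1/q=m/t+1/s$ when $t>mq$ (case (iii)), and $\preceq N^{1/s}$ for every $s\ge1$ when $t\le mq$ (case (iv)). Your appeal to ``general multilinear coincidence theorems'' in case (iv) is not a proof: you would need a coincidence $\Pi_1(^m\ell_r;\ell_q)=\mathcal L(^m\ell_r;\ell_q)$ with constants independent of $N$ precisely in the range $r\le 2mq/(2+mq)$, and no such result is cited or established. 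Without \eqref{lemadeDP} (or an equivalent device reducing to the $\ell_2$ domain) the upper-bound half of the $r\le2$ cases remains unproved.
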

These values can be represented by the following picture:
$$
\begin{pspicture}(3,3)
\pspolygon[linecolor=green!60!red!60,linewidth=0.2pt,%fillcolor=green!60!red!60,
fillstyle=hlines,hatchangle=71,hatchwidth=0.1pt](1.5,1.5)(2.1,1.5)(2.6,3)(1.5,3)
\pspolygon[linecolor=green!60!red!60,linewidth=0.2pt,fillcolor=gray!40!white!60,
fillstyle=solid	](2.1,0)(3,0)(3,3)(2.585,3)(2.1,1.5)
\pspolygon[linecolor=green!60!red!60,linewidth=0.2pt,%fillcolor=green!60!red!60,
fillstyle=hlines,hatchangle=90,hatchwidth=0.1pt](1.5,1.5)(0,3)(0,1.5)
\pspolygon[linecolor=green!60!red!60,linewidth=0.2pt,%fillcolor=green!60!red!60,
fillstyle=hlines,hatchangle=0,hatchwidth=0.1pt](1.5,1.5)(0,3)(1.5,3)
\psline(0,-0.2)(0,3.2)\psline(-0.2,0)(3.2,0)
\rput[d](0,3.4){$_{1/{q}}$}
\rput[l](3.4,0){$_{1/{r}}$}
\rput[u](3,-0.2){$_1$}
\rput[r](0,3){$_1$}
\rput[u](1.5,-0.3){$_\frac1{2}$}
\rput[r](-0.1,1.5){$_\frac1{2}$}
\psline[linestyle=dashed,linewidth=0.2pt](1.5,0)(1.5,1.5)
\rput[l](4,1.5){$\lambda_m(\Pi_1,r,q)$}
\rput[c](1.1,2.5){$_{1/q}$}
\rput[c](2.6,1.3){$_0$}
\rput[c](.5,1.9){$_{1/r'}$}
\end{pspicture}
$$

\smallskip
The proof will be splitted in several lemmas.
\begin{lemma}
Let $p\le q\le r'\le 2$ then $\lambda_m(\Pi_p,r,q)=\frac1{q}$.
\end{lemma}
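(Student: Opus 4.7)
The plan is to apply Theorem~\ref{formulita} to $\Phi_N \colon (\ell_r^N)^m \to \ell_q^N$. Since $\ell_q^N$ is an $\mathcal L_{q,1}^g$-space and the hypothesis $p \le q \le r' \le 2$ places us in case (a), (b) or (c) of that theorem, we obtain
\[
\pi_p(\Phi_N)^p = \frac{1}{c_{r',p}^{mp}} \int_{(\mathbb K^N)^m} \Big(\sum_{j=1}^N \prod_{i=1}^m |z^{(i)}_j|^q\Big)^{p/q}\, d\mu_{r'}^N(z^{(1)}) \cdots d\mu_{r'}^N(z^{(m)}).
\]
The task then reduces to sandwiching this $m$-fold integral $I_N$ between constants times $N^{p/q}$.

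For the upper bound I iterate Minkowski's integral inequality, applicable since $p \le q$. Freezing $z^{(2)}, \dots, z^{(m)}$ and setting $c_j := |z^{(2)}_j \cdots z^{(m)}_j|$, integrating in $z^{(1)}$ yields
\[
\Big(\int \Big(\sum_j |z_j^{(1)}|^q c_j^q\Big)^{p/q} d\mu_{r'}^N(z^{(1)})\Big)^{1/p} \le c_{r',p} \Big(\sum_j c_j^q\Big)^{1/q},
\]
and iterating $m$ times leaves $(\sum_{j=1}^N 1)^{1/q} = N^{1/q}$, so $\pi_p(\Phi_N) \le N^{1/q}$. For the lower bound, the elementary inequality $\|y\|_{\ell_q^N}^p \ge N^{p/q-1} \|y\|_{\ell_p^N}^p$ (valid for $p \le q$ by the power-mean inequality) applied to $y = \Phi_N(z^{(1)}, \dots, z^{(m)})$, combined with Fubini, the independence of the coordinates under $\mu_{r'}^N$, and $\int |z|^p\, d\mu_{r'}^1 = c_{r',p}^p$, gives $I_N \ge N^{p/q-1} \cdot N \cdot c_{r',p}^{mp}$, hence $\pi_p(\Phi_N) \ge N^{1/q}$. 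Combining, $\pi_p(\Phi_N) \asymp N^{1/q}$, whence $\lambda_m(\Pi_p, r, q) = 1/q$.

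The main subtlety is verifying the applicability of Theorem~\ref{formulita} across the whole hypothesis range and handling the degenerate case $p = q = r' < 2$, where $c_{r',p} = \infty$ and the integral formula is unavailable. In that case the identity $\pi_p(\Phi_N) = N^{1/p} = N^{1/q}$ follows directly from the definition of $\pi_p$: the Fubini expansion
\[
\sum_{j_1,\dots,j_m} \|\Phi_N(x^1_{j_1},\dots,x^m_{j_m})\|_{\ell_p^N}^p = \sum_k \prod_i \sum_{j_i} |x^i_{j_i,k}|^p \le N \prod_i w_p((x^i_{j_i})_{j_i})^p
\]
gives the upper bound, while testing $x^i_{j_i} = e_{j_i}$ (with $w_p((e_j)_j) = N^{1/p - 1/r'} = 1$) gives the matching lower bound.
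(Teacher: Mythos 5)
Your overall strategy (apply Theorem~\ref{formulita} to $\Phi_N$, then estimate the $m$-fold integral from above and below by $N^{1/q}$) is the paper's strategy, your lower bound is exactly the paper's (power-mean inequality plus Fubini and independence), and your separate treatment of the degenerate case $p=q=r'<2$ is a correct and welcome addition that the paper glosses over. However, the upper bound contains a genuine error: Minkowski's integral inequality in the mixed-norm form
$\bigl\| \,\|f\|_{\ell_q(j)}\,\bigr\|_{L_p(d\mu)} \le \bigl\|\, \|f\|_{L_p(d\mu)}\,\bigr\|_{\ell_q(j)}$
requires the \emph{outer} exponent to dominate the inner one, i.e.\ $p\ge q$; under your hypothesis $p\le q$ the inequality runs the other way, so your displayed one-variable estimate is a lower bound, not an upper bound. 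It is in fact false as stated: take $m=1$, $r=q=2$, $p=1$, $N=2$, $c_j\equiv 1$ over the reals; then the left-hand side is $\int_{\mathbb R^2}\|z\|_2\,d\mu_2^2(z)=\sqrt{\pi/2}$ while the right-hand side is $c_{2,1}\sqrt2=2/\sqrt\pi$, and $\sqrt{\pi/2}>2/\sqrt\pi$. Equivalently, your final claim $\pi_p(\Phi_N)\le N^{1/q}$ with constant $1$ fails already for $\pi_1(id_{\ell_2^2})=\pi/2>\sqrt2$.

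The fix is the paper's argument: since $p/q\le 1$ and $\mu_{r'}^N$ is a probability measure, Jensen's inequality gives
\begin{equation*}
\Big(\int \Big(\sum_j\textstyle\prod_i |z^{(i)}_j|^q\Big)^{p/q}\,d\mu\Big)^{1/p}\le\Big(\int \sum_j\textstyle\prod_i |z^{(i)}_j|^q\,d\mu\Big)^{1/q}=c_{r',q}^m N^{1/q},
\end{equation*}
whence $\pi_p(\Phi_N)\le (c_{r',q}/c_{r',p})^m N^{1/q}$. The constant is now $(c_{r',q}/c_{r',p})^m\ge 1$ rather than $1$, which is harmless for the limit order, but it requires $c_{r',q}<\infty$, i.e.\ $q<r'$ or $r'=2$; the remaining boundary case $p<q=r'<2$ is then covered either by your degenerate-case style of direct argument or by the next lemma of the paper (where $1/r'=1/q$). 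As written, though, the upper half of your proof does not stand.
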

\begin{proof}
Let $p\le q< r'\le 2$. Then by Theorem \ref{formulita},
\begin{eqnarray*}
c_{r',p}^m\pi_p(\Phi_N)
&=& \Big(\int_{\mathbb K^N}\dots\int_{\mathbb K^N}\Big(\sum_j|z^{(1)}_j\dots
z^{(m)}_j|^q\Big)^{p/q}d\mu^N_{r'}(z^{(1)})\dots d\mu^N_{r'}(z^{(m)})\Big)^{1/p}\\
&\le & \Big(\int_{\mathbb K^N}\dots\int_{\mathbb K^N}\sum_j|z^{(1)}_j\dots
z^{(m)}_j|^qd\mu^N_{r'}(z^{(1)})\dots d\mu^N_{r'}(z^{(m)})\Big)^{1/q}\\
&= & c_{r',q}^{m}N^{1/q}. \\
\end{eqnarray*}
Thus, $\lambda_m(\Pi_p,r,q)\le \frac1{q}$. On the other hand,
\begin{eqnarray*}
c_{r',p}^m\pi_p(\Phi_N) &=&  \Big(\int_{\mathbb K^N}\dots\int_{\mathbb K^N}\Big(\sum_j|z^{(1)}_j\dots
z^{(m)}_j|^q\Big)^{p/q}d\mu^N_{r'}(z^{(1)})\dots d\mu^N_{r'}(z^{(m)})\Big)^{1/p}\\
&\ge & \Big(\int_{\mathbb K^N}\dots\int_{\mathbb K^N}N^{p/q-1}\sum_j|z^{(1)}_j\dots
z^{(m)}_j|^pd\mu^N_{r'}(z^{(1)})\dots d\mu^N_{r'}(z^{(m)})\Big)^{1/p}\\
&= & c_{r',p}^{m}N^{1/q}.
\end{eqnarray*}
Hence, $\lambda_m(\Pi_p, r, q)\ge \frac{1}{q}$ and the proof is done.
\end{proof}

\begin{lemma}
Let $p<r'\le q<2$. Then $\lambda_m(\Pi_p,r,q)= \frac1{r'}$.
\end{lemma}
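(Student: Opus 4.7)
The plan is to show $\pi_p(\Phi_N)\asymp N^{1/r'}$ up to polylogarithmic factors, from which $\lambda_m(\Pi_p,r,q)=1/r'$ follows. The Jensen argument used in the previous lemma is unavailable here because $r'\le q$ forces $\mathbb E|Z_j|^q=\infty$ (where $Z_j=z^{(1)}_j\cdots z^{(m)}_j$), so a new idea is needed for both inequalities.

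For the upper bound, factor $\Phi_N=I\circ\tilde\Phi_N$, where $\tilde\Phi_N\in\mathcal L(^m\ell_r^N;\ell_{r'}^N)$ is given by the same formula as $\Phi_N$ and $I\colon\ell_{r'}^N\hookrightarrow\ell_q^N$ is the natural inclusion, which is a contraction because $r'\le q$. The ideal property yields $\pi_p(\Phi_N)\le\pi_p(\tilde\Phi_N)$. Since $p<r'<2$ and $\ell_{r'}^N$ is an $\mathcal L_{r',1}^g$-space, Theorem~\ref{formulita} applied to $\tilde\Phi_N$ gives
$$
c_{r',p}^m\pi_p(\tilde\Phi_N)=\Big(\int_{(\mathbb K^N)^m}\Big(\sum_j|Z_j|^{r'}\Big)^{p/r'}d\mu^N_{r'}(z^{(1)})\cdots d\mu^N_{r'}(z^{(m)})\Big)^{1/p}.
$$
The iid variables $Y_j=|Z_j|^{r'}$ satisfy $\mathbb P(Y_j>t)\preceq (\log t)^{m-1}/t$, the standard tail estimate for products of $r'$-stables. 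Writing $\mathbb E(\sum_jY_j)^{p/r'}=(p/r')\int_0^\infty t^{p/r'-1}\mathbb P(\sum_jY_j>t)\,dt$, splitting the integral at $t\sim N$ and using the heavy-tail single-big-jump bound $\mathbb P(\sum_jY_j>t)\preceq N\,\mathbb P(Y_1>t)$ for $t\gtrsim N$, one gets $\mathbb E(\sum_jY_j)^{p/r'}\preceq N^{p/r'}(\log N)^{m-1}$. Hence $\pi_p(\Phi_N)\preceq N^{1/r'+\varepsilon}$ for every $\varepsilon>0$, so $\lambda_m\le 1/r'$.

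For the lower bound, Theorem~\ref{formulita} applied directly to $\Phi_N$, together with the pointwise inequality $(\sum_j a_j^q)^{p/q}\ge\max_j a_j^p$, gives
$$
c_{r',p}^m\pi_p(\Phi_N)\ge\Big(\int_{(\mathbb K^N)^m}\max_j|Z_j|^p\,d\mu^N_{r'}(z^{(1)})\cdots d\mu^N_{r'}(z^{(m)})\Big)^{1/p}.
$$
On the positive-probability event $\bigcap_{k\ge 2}\{|z^{(k)}_j|\ge 1\}$ (whose measure is independent of $N$) one has $|Z_j|\ge|z^{(1)}_j|$, and the latter is $r'$-stable, so $\mathbb P(|Z_j|>t)\succeq 1/t^{r'}$ for large $t$. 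Evaluating at $t=cN^{1/r'}$, independence of the $Z_j$'s yields $\mathbb P(\max_j|Z_j|>cN^{1/r'})\ge 1-e^{-c'}$, uniformly in $N$. Therefore $\mathbb E\max_j|Z_j|^p\succeq N^{p/r'}$ and $\pi_p(\Phi_N)\succeq N^{1/r'}$, giving $\lambda_m\ge 1/r'$.

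The main obstacle is the upper-bound tail analysis: because $Y_j$ has (barely) infinite mean, the naive subadditivity inequality $(\sum_jY_j)^{p/r'}\le\sum_jY_j^{p/r'}$ overshoots by a factor of $N^{1-p/r'}$, and only the heavy-tail single-big-jump asymptotics produces the correct exponent $N^{p/r'}$. The logarithmic corrections appearing when $m>1$ are harmless for the computation of the limit order.
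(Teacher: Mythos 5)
Your proof is correct in substance, but it takes a genuinely different route from the paper's on both bounds. For the upper bound, the paper avoids the endpoint entirely: it bounds $\|\cdot\|_{\ell_q}\le\|\cdot\|_{\ell_s}$ for an arbitrary $s<r'$, applies Jensen, and uses that the $s$-th moment $c_{r',s}$ of an $r'$-stable variable is finite, obtaining $\pi_1(\Phi_N)\preceq N^{1/s}$ for every $s<r'$ --- which already pins down the limit order since it is defined as an infimum of exponents. Your factorization through $\ell_{r'}^N$ works at the endpoint $s=r'$ and therefore forces you into the heavy-tail analysis of $\sum_j|Z_j|^{r'}$ with barely infinite mean; this is doable (e.g.\ by truncating each $Y_j$ at level $N$ and using $(\sum a_j)^\beta\le(\sum a_j\mathbf{1}_{a_j\le N})^\beta+\sum a_j^\beta\mathbf{1}_{a_j>N}$), and yields the marginally sharper estimate $N^{1/r'}$ times polylogarithms, but note that your stated bound $\mathbb P(\sum_jY_j>t)\preceq N\,\mathbb P(Y_1>t)$ for $t\gtrsim N$ is not quite right for index-$1$ regularly varying tails: the truncated-sum contribution adds an extra logarithmic factor. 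This is harmless for the limit order, as you say, but it should be flagged rather than cited as ``standard.'' For the lower bound, the paper passes to the induced $(m+1)$-linear form $\Psi_N$ on $\ell_r^N\times\dots\times\ell_r^N\times\ell_{q'}^N$, uses $\pi_1(\Phi_N)\ge\pi_1(\Psi_N)$, the mixed-measure scalar integral formula, and a power-mean inequality; your argument instead bounds the $\ell_q$-norm below by the maximum and runs an extreme-value estimate on the i.i.d.\ products $Z_j$ (which is legitimate, since $\mu_{r'}^N$ is a product of one-dimensional stable measures, so the $Z_j$ are indeed i.i.d.). Your approach is more probabilistic and handles all $p<r'$ directly, whereas the paper proves the case $p=1$ and invokes the coincidence $\Pi_p(\ell_r,\ell_q)=\Pi_1(\ell_r,\ell_q)$ from \cite{BotMicPel10}; the paper's computations, on the other hand, use nothing beyond moment identities for stable measures and H\"older-type inequalities.
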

\begin{proof}
Let $1<s<r'\le q<2$. Then, by Theorem \ref{formulita},
  \begin{eqnarray*}
c_{r',1}^m\pi_1(\Phi_N)
&=& \int_{\mathbb K^N}\dots\int_{\mathbb K^N}\Big(\sum_j|z^{(1)}_j\dots
z^{(m)}_j|^q\Big)^{1/q}d\mu^N_{r'}(z^{(1)})\dots d\mu^N_{r'}(z^{(m)})\\
&\le& \int_{\mathbb K^N}\dots\int_{\mathbb K^N}\Big(\sum_j|z^{(1)}_j\dots
z^{(m)}_j|^s\Big)^{1/s}d\mu^N_{r'}(z^{(1)})\dots d\mu^N_{r'}(z^{(m)})\\
&\le & \Big(\int_{\mathbb K^N}\dots\int_{\mathbb K^N}\sum_j|z^{(1)}_j\dots
z^{(m)}_j|^sd\mu^N_{r'}(z^{(1)})\dots d\mu^N_{r'}(z^{(m)})\Big)^{1/s}\\
&= & c_{r',s}^{m}N^{1/s}. \\
\end{eqnarray*}
Since this is true for every $s<r'$, $\lambda_m(\Pi_1,r,q)\le \frac1{r'}$.

On the other hand, let $\Psi_N:\ell_r^N\times\dots\ell_r^N\times\ell_{q'}^N\to \mathbb C$, the $(m+1)$-linear
form  induced by $\Phi_N$. By \cite[Proposition 2.2]{PerVil04} or
\cite[Proposition 2.5]{Mat03}, $\pi_1(\Phi_N)\ge\pi_1(\Psi_N)$. Thus, by Theorem \ref{formulita
escalar} taking into account the comments after Theorem \ref{formulita}, we have
\begin{eqnarray*}
c_{r',1}^mc_{q,1}\pi_1(\Psi_N) &=& \int_{\mathbb K^N}\dots\int_{\mathbb
K^N}|\Psi_N(z^{(1)},\dots,z^{(m+1)})|d\mu^N_{r'}(z^{(1)})\dots d\mu^N_{r'}(z^{(m)})d\mu^N_{q}(z^{(m+1)})\\
&=& \int_{\mathbb K^N}\dots\int_{\mathbb K^N}\Big|\sum_jz^{(1)}_j\dots
z^{(m+1)}_j\Big|d\mu^N_{r'}(z^{(1)})\dots
d\mu^N_{r'}(z^{(m)})d\mu^N_{q}(z^{(m+1)})\\
&=& c_{r',1} \int_{\mathbb K^N}\dots\int_{\mathbb K^N}\Big(\sum_j|z^{(2)}_j\dots
z^{(m+1)}_j|^{r'}\Big)^{1/r'}d\mu^N_{r'}(z^{(2)})\dots d\mu^N_{r'}(z^{(m)})d\mu^N_{q}(z^{(m+1)})\\
&\ge& c_{r',1}N^{\frac1{r'}-1} \int_{\mathbb K^N}\dots\int_{\mathbb K^N}\sum_j|z^{(2)}_j\dots
z^{(m+1)}_j|d\mu^N_{r'}(z^{(2)})\dots d\mu^N_{r'}(z^{(m)})d\mu^N_{q}(z^{(m+1)})\\
&=& c_{r',1}^mc_{q,1}N^{\frac1{r'}-1}N \,=\, c_{r',1}^mc_{q,1}N^{\frac1{r'}}
\end{eqnarray*}
Therefore $\lambda_m(\Pi_1,r,q)= \frac1{r'}$.

This proves our assertions for $p=1$. By \cite[Theorem 4.7]{BotMicPel10},
$\Pi_p(\ell_r,\ell_q)=\Pi_1(\ell_r,\ell_q)$ for every $1\le p\le 2$, and the lemma follows.
\end{proof}

Since $\ell_r$ has cotype 2 for $1\le r\le 2$,  given  any $m$-linear form $T\in \mathcal
L(\ell_r^N,\dots,\ell_r^N; \mathbb C)$, we know from
\cite[Lemma 4.5]{DefPer08} that
\begin{equation}\label{lemadeDP}
\pi_1(T)\asymp \sup \pi_1(T\circ (D_{\sigma_1},\dots,D_{\sigma_m})),
\end{equation}
where the supremum is taken over the set of norm one diagonal operators $D_{\sigma_j}:\ell_2^N\to \ell_r^N$. The vector-valued version of this result follows the same lines, so \eqref{lemadeDP} holds for any $m$-linear map $T\in \mathcal
L(\ell_r^N,\dots,\ell_r^N; Y)$, for every Banach space $Y$.

\begin{lemma}
Let $1\le p,q,r\le2$. Then

$(i)$ $\lambda_m(\Pi_p,r,q)=0$ for $1\le r\le \frac{2mq}{2+mq}$.

$(ii)$ $\lambda_m(\Pi_p,r,q)=\frac1{q}+\frac{m}{2}-\frac{m}{r}$ for $\frac{2mq}{2+mq}< r\le 2$.
\end{lemma}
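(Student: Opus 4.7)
The plan is to reduce the problem to $\ell_2^N$ in the domain (so that Theorem~\ref{formulita} applies) and then to estimate the resulting Gaussian integral from above and below.

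First I would invoke \cite[Theorem~4.7]{BotMicPel10}, exactly as in the previous lemma, to get $\pi_p(\Phi_N)\asymp\pi_1(\Phi_N)$ for all $1\le p\le 2$, reducing everything to $\lambda_m(\Pi_1,r,q)$. Then the vector-valued form of \eqref{lemadeDP} yields
$$\pi_1(\Phi_N)\asymp \sup_{\vec\sigma}\pi_1\bigl(\Phi_N\circ(D_{\sigma_1},\ldots,D_{\sigma_m})\bigr),$$
where the supremum ranges over norm-one diagonal operators $D_{\sigma_k}:\ell_2^N\to\ell_r^N$. Since $1\le r\le 2$, the operator norm of $D_\sigma$ equals $\|\sigma\|_s$ with $s=\frac{2r}{2-r}$, so the normalization is $\|\sigma_k\|_s=1$. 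A direct computation shows that $T_{\vec\sigma}:=\Phi_N\circ(D_{\sigma_1},\ldots,D_{\sigma_m})$ is the weighted diagonal $m$-linear map $(x^1,\ldots,x^m)\mapsto \sum_{j=1}^N \tau_j\, x^1_j\cdots x^m_j\, e_j$ with $\tau_j=\prod_{k=1}^m(\sigma_k)_j$.

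Now $T_{\vec\sigma}$ has $\ell_2^N$ in the domain and $\ell_q^N$ (a $\mathcal L_{q,1}^g$-space) in the range, and since $1\le q\le 2$, case (a) or (b) of Theorem~\ref{formulita} applies with $p=1$, giving
$$c_{2,1}^m\,\pi_1(T_{\vec\sigma})=\int_{\mathbb K^N}\!\!\cdots\!\int_{\mathbb K^N}\Big(\sum_{j=1}^N|\tau_j|^q\,|z^{(1)}_j\cdots z^{(m)}_j|^q\Big)^{1/q}d\mu_2^N(z^{(1)})\cdots d\mu_2^N(z^{(m)}).$$
For the upper bound I would apply Jensen's inequality (concavity of $t\mapsto t^{1/q}$, valid since $q\ge 1$) together with Fubini to bound the integral by $c_{2,q}^m\|\tau\|_q$, and then control $\|\tau\|_q$ via the generalized H\"older inequality $\|\tau\|_{s/m}\le\prod_k\|\sigma_k\|_s=1$. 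When $s\ge mq$, i.e.\ $r>\frac{2mq}{2+mq}$ (case (ii)), this yields $\|\tau\|_q\le N^{1/q-m/s}=N^{1/q+m/2-m/r}$; when $s\le mq$ (case (i)), the embedding $\ell_{s/m}\hookrightarrow \ell_q$ gives $\|\tau\|_q\le\|\tau\|_{s/m}\le 1$. Combined with the trivial bound $\lambda_m\ge 0$ (since $\pi_1(\Phi_N)\ge\|\Phi_N\|\ge 1$), case (i) is complete.

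For the matching lower bound in case (ii), I would test against the uniform extremal $\sigma_k=N^{-1/s}(1,\ldots,1)$, so that $\tau_j=N^{-m/s}$ for every $j$ and the integral reduces to $N^{-m/s}\,\mathbb E\bigl[(\sum_j|\xi_j|^q)^{1/q}\bigr]$ where $\xi_j=\prod_k z^{(k)}_j$ are i.i.d.\ products of Gaussians (all moments finite). The main (and essentially the only) technical point is the lower estimate $\mathbb E[(\sum_j|\xi_j|^q)^{1/q}]\succeq N^{1/q}$; I would obtain it via Paley--Zygmund applied to the sum $S_N:=\sum_j|\xi_j|^q$, whose first and second moments are easily computed to be $Nc_{2,q}^{qm}$ and $N(N-1)c_{2,q}^{2qm}+Nc_{2,2q}^{2qm}$, respectively, so $\Pr(S_N\ge\tfrac12\,\mathbb E S_N)$ is bounded away from $0$ uniformly in $N$. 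This gives $\pi_1(T_{\vec\sigma})\succeq N^{1/q+m/2-m/r}$, proving $\lambda_m(\Pi_1,r,q)\ge \frac1q+\frac m2-\frac mr$ and completing case (ii).
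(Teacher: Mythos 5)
Your proposal is correct and follows essentially the same route as the paper: reduce to $p=1$ via the coincidence $\Pi_p=\Pi_1$ on cotype-$2$ domains, pass to diagonal compositions $\Phi_N\circ(D_{\sigma_1},\dots,D_{\sigma_m})$ using \eqref{lemadeDP}, apply Theorem~\ref{formulita} to get the Gaussian integral, and split according to whether $s=\tfrac{2r}{2-r}$ exceeds $mq$ via H\"older. The only real divergence is your lower bound in case ($ii$): the paper avoids Paley--Zygmund entirely by using the elementary comparison $\bigl(\sum_j a_j^q\bigr)^{1/q}\ge N^{1/q-1}\sum_j a_j$ and then computing $\int\sum_j|\sigma_1(j)z^{(1)}_j\cdots\sigma_m(j)z^{(m)}_j|\,d\mu_2^N\cdots d\mu_2^N=c_{2,1}^m\sum_j|\sigma_1(j)\cdots\sigma_m(j)|$ exactly by Fubini; your concentration argument is valid (the moment computations check out) but heavier than necessary.
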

\begin{proof}
Let $\frac1{t}=\frac1{r}-\frac12$, then for any diagonal operator we have $\|D_\sigma\|_{\mathcal L(\ell_2^N;\ell_r^N)}=\|\sigma\|_{\ell_t^N}$.
Since $\Phi_N\circ (D_{\sigma_1},\dots,D_{\sigma_m})\in\mathcal L(^m\ell_2^N;\ell_q^N)$, by Theorem
\ref{formulita} we have
\begin{equation*}\label{pi_1 con diagonales}
\pi_1(\Phi_N\circ (D_{\sigma_1},\dots,D_{\sigma_m}))\asymp
\int_{\mathbb K^N}\dots\int_{\mathbb
K^N}\Big(\sum_{j=1}^N |\sigma_1(j)z_j^{(1)}\dots\sigma_m(j)z_j^{(m)}|^q\Big)^{1/q}d\mu^N_{2}(z^{(1)})\dots
d\mu^N_{2}(z^{(m)}).
\end{equation*}

$(i)$ The assumption $1\le r\le \frac{2mq}{2+mq}$ implies $t\le mq$. Then
$$
\Big(\sum_{j=1}^N |\sigma_1(j)z_j^{(1)}\dots\sigma_m(j)z_j^{(m)})|^q\Big)^{1/q}\le \|\sigma_1\|_{\ell_t^N}\dots\|\sigma_m\|_{\ell_t^N}\sup_{j} |z_j^{(1)}\dots z_j^{(m)}|.
$$
Consequently, for any $s\ge1$, we have
\begin{align*}
\pi_1(\Phi_N\circ (D_{\sigma_1},\dots,D_{\sigma_m})) &\preceq \int_{\mathbb K^N}\dots\int_{\mathbb
K^N}\sup_{j} |z_j^{(1)}\dots z_j^{(m)}|d\mu^N_{2}(z^{(1)})\dots
d\mu^N_{2}(z^{(m)}) \\
&\le \Big(\int_{\mathbb K^N}\dots\int_{\mathbb
K^N}\sum_{j=1}^N |z_j^{(1)}\dots z_j^{(m)}|^s d\mu^N_{2}(z^{(1)})\dots
d\mu^N_{2}(z^{(m)})\Big)^{1/s}   =  c_{2,s}^mN^\frac1{s},
\end{align*}
which implies that $\lambda_m(\Pi_1,r,q)=0$.

$(ii)$ The assumption $\frac{2mq}{2+mq}\le r< 2$ implies $t>mq$. Let $\frac1{q}=\frac{m}{t}+\frac1{s}$. Then
$$
\Big(\sum_{j=1}^N |\sigma_1(j)z_j^{(1)}\dots\sigma_m(j)z_j^{(m)}|^q\Big)^{1/q}\le \|\sigma_1\|_{\ell_t^N}\dots\|\sigma_m\|_{\ell_t^N}\Big(\sum_{j=1}^N |z_j^{(1)}\dots z_j^{(m)})|^s\Big)^{1/s}.
$$
Thus we have,
\begin{eqnarray*}
\pi_1(\Phi_N\circ (D_{\sigma_1},\dots,D_{\sigma_m})) &\preceq& \int_{\mathbb K^N}\dots\int_{\mathbb
K^N}\Big(\sum_{j=1}^N |z_j^{(1)}\dots z_j^{(m)}|^s\Big)^{1/s}d\mu^N_{2}(z^{(1)})\dots
d\mu^N_{2}(z^{(m)})\\
&\le & \Big(\int_{\mathbb K^N}\dots\int_{\mathbb
K^N}\sum_{j=1}^N |z_j^{(1)}\dots z_j^{(m)}|^s d\mu^N_{2}(z^{(1)})\dots
d\mu^N_{2}(z^{(m)})\Big)^{1/s}\\
&=& c_{2,s}^{m} N^{1/s}\quad \asymp\, N^{\frac1{q}+\frac{m}{2}-\frac{m}{r}}.
\end{eqnarray*}
On the other hand,
\begin{align*}
\pi_1(\Phi_N\circ (D_{\sigma_1},\dots,D_{\sigma_m})) &\succeq
 N^{-1/q'}\int_{\mathbb K^N}\dots\int_{\mathbb K^N}
\sum_{j=1}^N |\sigma_1(j)z_j^{(1)}\dots\sigma_m(j)z_j^{(m)}|d\mu^N_{2}(z^{(1)})\dots
d\mu^N_{2}(z^{(m)})\\
&= N^{-1/q'}\,c_{2,1}^m\,
\sum_{j=1}^N |\sigma_1(j)\dots\sigma_m(j)|.
\end{align*}
Taking supremum over $\sigma_k\in B_{\ell_t^N}$, $k=1,\dots,m$, and using \eqref{lemadeDP} we get that $$\pi_1(\Phi_N) \succeq  N^{-1/q'} N^{1-m/t} c_{2,1}^m  \asymp\, N^{\frac1{q}+\frac{m}{2}-\frac{m}{r}}.$$
This proves our assertions for $p=1$. Since $\ell_r$ has cotype 2, by \cite[Theorem 4.6]{BotMicPel10},
$\Pi_p(\ell_r,\ell_q)$ coincides with $\Pi_1(\ell_r,\ell_q)$ for every $1\le p\le 2$, and the lemma follows.
\end{proof}

\section{Proofs of the main results}\label{sec-proofs}

The proofs will be splitted in a few lemmas. We will also use the following result, which is \cite[Proposition 3.1]{Per04}.
\begin{proposition}[P\'erez-Garc\'ia]\label{perez}
Let $T\in\Pi_p^m(X_1,\dots,X_m;Y)$ and let $(\Omega_j,\mu_j)$ be measure spaces for each $1\le j\le m$. We
have
\begin{multline*}
\Big(\int_{\Omega_1}\dots\int_{\Omega_m}\|T(f_1(w_1),\dots,f_m(w_m))\|_Y^p \, d\mu_1(w_1)\dots
d\mu_m(w_m)\Big)^{1/p} \\ \le \pi_p(T) \prod_{j=1}^m\sup_{x_j^*\in B_{X_j^*}} \Big(\int_{\Omega_j} |\langle
x_j^*,f_j(w_j)\rangle|^pd\mu_j(w_j)\Big)^{1/p},
\end{multline*}
for every $f_j\in L_p (\mu_j,X_j)$.
\end{proposition}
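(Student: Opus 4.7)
The plan is to prove the inequality first for simple (step) functions and then extend by density to general $f_j \in L_p(\mu_j, X_j)$. So fix, for each $j$, a simple function $f_j = \sum_{i_j=1}^{N_j} x_{i_j}^{(j)} \chi_{A_{i_j}^{(j)}}$, where the sets $A_{i_j}^{(j)}$ are pairwise disjoint and of finite measure. By multilinearity of $T$,
$$
T(f_1(w_1),\dots,f_m(w_m)) = \sum_{i_1,\dots,i_m} T(x_{i_1}^{(1)},\dots,x_{i_m}^{(m)})\,\chi_{A_{i_1}^{(1)}}(w_1)\cdots\chi_{A_{i_m}^{(m)}}(w_m),
$$
and since the cross products of indicators are disjoint across the index tuple, the $p$-th power of the left-hand side integrates to
$$
\sum_{i_1,\dots,i_m} \|T(x_{i_1}^{(1)},\dots,x_{i_m}^{(m)})\|_Y^p\; \mu_1(A_{i_1}^{(1)})\cdots\mu_m(A_{i_m}^{(m)}).
$$

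The key trick is to absorb the measures of the level sets into the vectors: set $y_{i_j}^{(j)} := \mu_j(A_{i_j}^{(j)})^{1/p}\,x_{i_j}^{(j)}$. Using again multilinearity, the displayed sum equals $\sum_{i_1,\dots,i_m} \|T(y_{i_1}^{(1)},\dots,y_{i_m}^{(m)})\|_Y^p$, so by the definition of the multiple $p$-summing norm this is bounded by $\pi_p(T)^p \prod_{j=1}^m w_p\bigl((y_{i_j}^{(j)})_{i_j}\bigr)^p$. Now compute each weak-$p$ norm: for $x_j^*\in B_{X_j^*}$,
$$
\sum_{i_j} |\langle x_j^*,y_{i_j}^{(j)}\rangle|^p = \sum_{i_j} |\langle x_j^*,x_{i_j}^{(j)}\rangle|^p \mu_j(A_{i_j}^{(j)}) = \int_{\Omega_j}|\langle x_j^*,f_j(w_j)\rangle|^p d\mu_j(w_j),
$$
because $f_j$ is simple with disjoint supports. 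Taking the supremum over $x_j^*\in B_{X_j^*}$ produces exactly the factors on the right-hand side of the claimed inequality, so the proposition holds for simple functions.

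To finish, one extends to arbitrary $f_j\in L_p(\mu_j,X_j)$ by approximation. Choose simple functions $f_j^{(n)}\to f_j$ in $L_p(\mu_j,X_j)$. The right-hand side behaves well because the suprema $\sup_{x_j^*\in B_{X_j^*}}\bigl(\int|\langle x_j^*,\cdot\rangle|^p d\mu_j\bigr)^{1/p}$ are dominated by the Bochner norm and are continuous with respect to it. For the left-hand side, one uses the continuity of $T$ (as a bounded $m$-linear map) together with H\"older's inequality applied coordinate-by-coordinate to the telescoping difference
$$
T(f_1,\dots,f_m) - T(f_1^{(n)},\dots,f_m^{(n)}) = \sum_{k=1}^m T(f_1^{(n)},\dots,f_{k-1}^{(n)},f_k-f_k^{(n)},f_{k+1},\dots,f_m),
$$
which gives convergence of the mixed $L_p$ integrals. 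I expect the main technical point to be this last passage to the limit, since one must juggle the $m$-linearity of $T$, the continuity in each argument separately, and the fact that the right-hand side involves a supremum over dual-ball functionals rather than simply the Bochner norms; fortunately the bound for simple functions is uniform in the approximating sequences because the estimate $\pi_p(T)\prod_j\sup_{x_j^*}\bigl(\int|\langle x_j^*,f_j^{(n)}\rangle|^p d\mu_j\bigr)^{1/p}$ is bounded by $\pi_p(T)\prod_j\|f_j^{(n)}\|_{L_p(\mu_j,X_j)}$, ensuring the limit inequality survives.
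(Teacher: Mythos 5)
Your proof is correct: the paper does not actually prove this proposition but imports it from \cite[Proposition 3.1]{Per04}, and your argument (reduce to simple functions with pairwise disjoint supports of finite measure, absorb the weights $\mu_j(A_{i_j}^{(j)})^{1/p}$ into the vectors, apply the definition of the multiple $p$-summing norm, then pass to the limit by density of simple functions and boundedness of $T$) is essentially the standard proof of that result. The only points left implicit are routine: the strong measurability of $w\mapsto T(f_1(w_1),\dots,f_m(w_m))$, and the fact that the telescoping limit step uses the triangle inequality in $L_p$ and hence $p\ge 1$ (for $p<1$ one would use the $p$-triangle inequality instead).
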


A simple consequence of this proposition is the following.

\begin{lemma}\label{formulita desig facil}
Let $T$ be a multilinear operator in $\mathcal L(^m\ell_r^N;Y)$, and $p<r'<2$ or $r=2$.
Then
$$
\Big(\int_{\mathbb K^N}\dots\int_{\mathbb K^N}\|T(z^{(1)},\dots,z^{(m)})\|_Y^p \, d\mu^N_{r'}(z^{(1)})\dots
d\mu^N_{r'}(z^{(m)})\Big)^{1/p}\le c_{r',p}^m\pi_p(T).
$$
\end{lemma}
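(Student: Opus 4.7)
The plan is to apply P\'erez-Garc\'ia's Proposition \ref{perez} directly, with the identity functions playing the role of the $f_j$'s. More precisely, take $(\Omega_j,\mu_j) = (\mathbb K^N, \mu^N_{r'})$ for each $j=1,\dots,m$, $X_j = \ell_r^N$, $Y$ as in the statement, and define $f_j : \mathbb K^N \to \ell_r^N$ by $f_j(w) = w$. A preliminary (easy) check is that $f_j \in L_p(\mu^N_{r'}, \ell_r^N)$: in the Gaussian case $r=2$ all moments are finite, and when $r'<2$ the condition $p<r'$ guarantees finite $p$-th moments of the $r'$-stable measure, and hence of $\|z\|_{\ell_r^N}$.

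Under this choice, the left-hand side of the inequality in Proposition \ref{perez} is exactly the integral appearing in the statement of the lemma. To handle the right-hand side, one must evaluate, for each $j$,
\[
\sup_{\gamma\in B_{(\ell_r^N)^*}} \Big(\int_{\mathbb K^N}|\langle\gamma,z\rangle|^p\, d\mu^N_{r'}(z)\Big)^{1/p}.
\]
Since $(\ell_r^N)^* = \ell_{r'}^N$, this is precisely where the $s$-stable Khintchine formula \eqref{estable} (respectively, the Gaussian Khintchine formula \eqref{khintchine gaussiano} when $r=2$) kicks in: with $s=r'$ and $q=p<r'$ one obtains
\[
\Big(\int_{\mathbb K^N}|\langle\gamma,z\rangle|^p\, d\mu^N_{r'}(z)\Big)^{1/p} = c_{r',p}\,\|\gamma\|_{\ell_{r'}^N},
\]
so the supremum over $\gamma\in B_{\ell_{r'}^N}$ equals $c_{r',p}$.

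Plugging these two computations into Proposition \ref{perez} gives
\[
\Big(\int_{\mathbb K^N}\!\!\dots\!\!\int_{\mathbb K^N}\|T(z^{(1)},\dots,z^{(m)})\|_Y^p\, d\mu^N_{r'}(z^{(1)})\dots d\mu^N_{r'}(z^{(m)})\Big)^{1/p}\le \pi_p(T)\prod_{j=1}^m c_{r',p} = c_{r',p}^m\,\pi_p(T),
\]
which is the desired bound. There is no real obstacle here; the whole argument is just the correct dictionary between the multiple-summing testing functionals and the stable Khintchine identity, together with the integrability check for $f_j(w)=w$. The only mildly delicate point is the uniform treatment of the two regimes $r=2$ (Gaussian case) and $p<r'<2$ (strictly stable case), which is handled by invoking \eqref{khintchine gaussiano} in the former and \eqref{estable} in the latter.
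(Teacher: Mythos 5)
Your proposal is correct and follows essentially the same route as the paper: both apply Proposition~\ref{perez} with $f_j(z)=z$ on $(\mathbb K^N,\mu^N_{r'})$ and then identify the weak $L_p$ factor as $c_{r',p}$ (the paper phrases this via rotation invariance of the stable measure, you via the equivalent Khintchine-type identity~\eqref{estable}, resp.~\eqref{khintchine gaussiano} for $r=2$). No substantive difference.
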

\begin{proof}
Let $(\Omega_j,\mu_j)=(\mathbb K^N,\mu_{r'})$, $f_j\in L_p((\mathbb K^N,\mu_{r'}),\mathbb K^N)$, $f_j(z)=z$
for all $j$ and $p<r'<2$ or $r=2$. By Proposition \ref{perez} and rotation invariance of stable measures,
\begin{multline*}
\Big(\int_{\mathbb K^N}\dots\int_{\mathbb K^N}\|T(z^{(1)},\dots,z^{(m)})\|_Y^p \, d\mu^N_{r'}(z^{(1)})\dots
d\mu^N_{r'}(z^{(m)})\Big)^{1/p} \\
\le \pi_p(T) \prod_{j=1}^m\sup_{w_j\in B_{\ell_{r'}^N}} \Big(\int_{\mathbb K^N} |\langle
z^{(j)},w_j\rangle|^pd\mu^N_{r'}(z^{(j)})\Big)^{1/p}\\
= \pi_p(T)\Big(\int_{\mathbb K^N} |e_1'(z)|^pd\mu^N_{r'}(z)\Big)^{m/p} \; = \; \pi_p(T)c_{r',p}^{m}. \qedhere
\end{multline*}
\end{proof}

Now we are ready to prove Theorem~\ref{formulita escalar}.

\begin{proof} \emph{of Theorem~\ref{formulita escalar}}
One inequality is given in the previous Lemma.
We prove the reverse inequality by induction on $m$. For $m=1$, we have $\phi\in(\ell_r^N)'=\ell_{r'}^N$ and then
\begin{eqnarray*}
\pi_p(\phi) & = &  \|\phi\|_{\ell_{r'}^N} \; = \;\Big(\sum_{j=1}^N |e_j'(\phi)|^{r'}\Big)^{1/r'} = \;
c_{r',p}^{-1}\Big(\int_{\mathbb K^N}\Big|\sum_{j=1}^N e_j'(\phi) z_j\Big|^pd\mu^N_{r'}(z)\Big)^{1/p}  \\
  & = & c_{r',p}^{-1}\Big(\int_{\mathbb K^N}|\phi(z)|^pd\mu^N_{r'}(z)\Big)^{1/p} .
 \end{eqnarray*}
Suppose that for any $k$-linear  form $\psi:\ell_r^N\times\dots\times\ell_r^N\to\mathbb K$, with $k<m$,
we have,
\begin{eqnarray*}
\sum_{n_1,\dots,n_k}|\psi(u_{n_1}^{(1)},\dots,u_{n_k}^{(k)})|^p \le c_{r',p}^{-kp}\Big(\int_{\mathbb
K^N}\dots\int_{\mathbb K^N}|\psi(z^{(1)},\dots,z^{(k)})|^pd\mu^N_{r'}(z^{(1)})\dots d\mu^N_{r'}(z^{(k)})\Big),
\end{eqnarray*}
for all sequences   $(u_{n_j}^{(j)})\subset \ell_r^N$, with $w_p(u_{n_j}^{(j)})=1$, $j=1,\dots, k$.

Let $\phi$ be an $m$-linear form, and $(u_{n_j}^{(j)})\subset \ell_r^N$, with $w_p(u_{n_j}^{(j)})=1$,
$j=1,\dots, m$. Then
\begin{align*}
%\begin{multline*}
\sum_{n_1,\dots,n_m} & |\phi(u_{n_1}^{(1)},\dots,u_{n_m}^{(m)})|^p  =
\sum_{n_1}\sum_{n_2,\dots,n_m}|\phi(u_{n_1}^{(1)},\dots,u_{n_m}^{(m)})|^p \\
%\ \hspace{3cm}\shoveleft
 & \le  c_{r',p}^{-(m-1)p}\sum_{n_1}\Big(\int_{\mathbb K^N}\dots\int_{\mathbb
K^N}|\phi({u_{n_1}^{(1)}},z^{(2)},\dots,z^{(m)})|^pd\mu^N_{r'}(z^{(2)})\dots d\mu^N_{r'}(z^{(m)})\Big) \\
%\ \hspace{3cm}\shoveleft
&=  c_{r',p}^{-(m-1)p}\Big(\int_{\mathbb K^N}\dots\int_{\mathbb
K^N}\sum_{n_1}|\phi({u_{n_1}^{(1)}},z^{(2)},\dots,z^{(m)})|^pd\mu^N_{r'}(z^{(2)})\dots
d\mu^N_{r'}(z^{(m)})\Big) \\
%\ \hspace{3cm}\shoveleft
&\le  c_{r',p}^{-(m-1)p}\int_{\mathbb K^N}\dots\int_{\mathbb K^N} \Big(c_{r',p}^{-p}\int_{\mathbb
K^N}|\phi(z^{(1)},z^{(2)},\dots,z^{(m)})|^pd\mu^N_{r'}(z^{(1)})\Big) d\mu^N_{r'}(z^{(2)})\dots d\mu^N_{r'}(z^{(m)})  \\
% \ \hspace{3cm} \shoveleft
&=c_{r',p}^{-mp}\Big(\int_{\mathbb K^N}\dots\int_{\mathbb K^N} |\phi(z^{(1)},\dots,z^{(m)})|^p
d\mu_{r'}^N(z^{(1)})\dots d\mu^N_{r'}(z^{(m)})\Big) .
%\end{multline*}
\end{align*}
Therefore,
$$
c_{r',p}^m\pi_p(\phi)\le \Big(\int_{\mathbb K^N}\dots\int_{\mathbb
K^N}|\phi(z^{(1)},\dots,z^{(m)})|^pd\mu^N_{r'}(z^{(1)})\dots d\mu^N_{r'}(z^{(m)})\Big)^{1/p}. \qedhere
$$
\end{proof}

\medskip

Let us continue  our way to the proof of Theorem~\ref{formulita}.

\begin{lemma}
Let $T$ be an $m$-linear mapping in $\mathcal L(^mE;\ell_q^M)$, $0<p<q<2$ or $q=2$. Then
$$
c_{q,p}\pi_p(T)\le \Big(\int_{\mathbb K^M}\pi_p(z\circ T)^pd\mu^M_{q}(z)\Big)^{1/p}.
$$
In particular, if $T$ is linear,
$$
c_{q,p}\pi_p(T)\le \Big(\int_{\mathbb K^M}\|T'(z)\|_{E'}^p \, d\mu^M_{q}(z)\Big)^{1/p}.
$$
\end{lemma}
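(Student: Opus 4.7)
The plan is to combine formula \eqref{estable} applied pointwise in $\ell_q^M$ with the definition of the multiple $p$-summing norm of the scalar-valued composition $z\circ T$, where $z\in\mathbb K^M$ is viewed as the linear functional $y\mapsto \sum_j z_jy_j$ on $\ell_q^M$. The hypothesis $0<p<q$ (or $q=2$) is exactly what is needed to guarantee that \eqref{estable} applies to the $q$-stable measure $\mu_q$.

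Fix finite sequences $(x^k_{j_k})_{j_k}\subset E$ for $k=1,\dots,m$. For each fixed $(j_1,\dots,j_m)$, apply \eqref{estable} to the vector $T(x^1_{j_1},\dots,x^m_{j_m})\in\mathbb K^M$:
\begin{equation*}
c_{q,p}^p\,\bigl\|T(x^1_{j_1},\dots,x^m_{j_m})\bigr\|_{\ell_q^M}^p
= \int_{\mathbb K^M} \bigl|(z\circ T)(x^1_{j_1},\dots,x^m_{j_m})\bigr|^p\,d\mu^M_q(z).
\end{equation*}
Summing over $j_1,\dots,j_m$ and interchanging sum and integral (both nonnegative), I get
\begin{equation*}
c_{q,p}^p\sum_{j_1,\dots,j_m}\bigl\|T(x^1_{j_1},\dots,x^m_{j_m})\bigr\|_{\ell_q^M}^p
= \int_{\mathbb K^M}\sum_{j_1,\dots,j_m}\bigl|(z\circ T)(x^1_{j_1},\dots,x^m_{j_m})\bigr|^p\,d\mu^M_q(z).
\end{equation*}
For each $z$, the integrand is controlled by the definition of the multiple $p$-summing norm applied to the scalar form $z\circ T$:
\begin{equation*}
\sum_{j_1,\dots,j_m}\bigl|(z\circ T)(x^1_{j_1},\dots,x^m_{j_m})\bigr|^p
\le \pi_p(z\circ T)^p\,\prod_{k=1}^m w_p\bigl((x^k_{j_k})_{j_k}\bigr)^p.
\end{equation*}
Plugging this in, taking $p$-th roots and then the supremum over all sequences with $w_p((x^k_{j_k}))\le 1$, I obtain
\begin{equation*}
c_{q,p}\,\pi_p(T)\le \Bigl(\int_{\mathbb K^M}\pi_p(z\circ T)^p\,d\mu^M_q(z)\Bigr)^{1/p},
\end{equation*}
which is the first claimed inequality.

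For the linear specialization, note that $z\circ T=T'(z)\in E'$ is a single linear functional on $E$. Since for any functional $\phi\in E'$ the inequality $\bigl(\sum|\phi(x_j)|^p\bigr)^{1/p}\le\|\phi\|\,w_p((x_j))$ is trivial (take $\gamma=\phi/\|\phi\|$ in the supremum defining $w_p$), one has $\pi_p(\phi)=\|\phi\|$, and the second statement follows immediately from the first. The only step requiring real care is verifying that the hypotheses permit the use of \eqref{estable} uniformly in the arguments of $T$; the condition $p<q<2$ (or $q=2$, where one uses the Khintchine/Gaussian formula \eqref{khintchine gaussiano}) is precisely what makes the right-hand integral finite and the identity valid.
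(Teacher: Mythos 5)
Your proof is correct and follows essentially the same route as the paper's: apply the stable-measure identity \eqref{estable} (or the Gaussian formula when $q=2$) coordinatewise to each vector $T(x^1_{j_1},\dots,x^m_{j_m})\in\ell_q^M$, exchange sum and integral, bound the inner sum by $\pi_p(z\circ T)^p$ for each fixed $z$, and note $\pi_p(\phi)=\|\phi\|$ for the linear case. The only cosmetic difference is that the paper normalizes $w_p((u^j_{k_j}))=1$ from the outset rather than carrying the weak-$\ell_p$ factors and taking a supremum at the end.
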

\begin{proof}
For $(u_{k_j}^j)\subset \ell_2^N$ with $w_p((u_{k_j}^j))=1$, $j=1, \ldots, m,$ we have
\begin{eqnarray*}
\sum_{k_1,\dots,k_m} \|T(u_{k_1}^1,\dots,u_{k_m}^m)\|_{\ell_q^M}^p & = &  \sum_{k_1,\dots,k_m} \Big(\sum_{j=1}^M |e_j'\circ T(u_{k_1}^1,\dots,u_{k_m}^m)|^q\Big)^{p/q} \\
 & = & \sum_{k_1,\dots,k_m} c_{q,p}^{-p}\Big(\int_{\mathbb K^M}\Big|\sum_{j=1}^M e_j'\circ T(u_{k_1}^1,\dots,u_{k_m}^m)
z_j\Big|^pd\mu^M_{q}(z)\Big)  \\
 & = & \sum_{k_1,\dots,k_m} c_{q,p}^{-p}\Big(\int_{\mathbb K^M}|z\circ
T(u_{k_1}^1,\dots,u_{k_m}^m)|^pd\mu^M_{q}(z)\Big)
 \\
  & \le & c_{q,p}^{-p}\Big(\int_{\mathbb K^M}\pi_p(z\circ T)^pd\mu^M_{q}(z)\Big) .
\end{eqnarray*}
Therefore,
$$
c_{q,p}\pi_p(T)\le \Big(\int_{\mathbb K^M}\pi_p(z\circ T)^pd\mu^M_{q}(z)\Big)^{1/p}.
$$

For $m=1$, $z\circ T$ is a linear form, and then we have $\pi_p(z\circ T)=\|z\circ T\|_{E'}=\|T'(z)\|_{E'}$.
\end{proof}

By a {\it Banach sequence space} we mean a Banach space $X\subset \mathbb K^{\mathbb N}$ of sequences in $\mathbb K$ such that $\ell_1\subset X\subset \ell_\infty$ with norm one inclusions satisfying that if 
$x\in\mathbb K^{\mathbb N}$ and $y\in X$ are such that $|x_n|\le |y_n|$ for every $n\in\mathbb N$, then $x$ belongs to $X$ and $\|x\|_X\le\|y\|_X$.
We will now show that if we consider multilinear mappings whose range are certain Banach sequence spaces, then
the norm of the multilinear mapping defined by the 
integral formula is equivalent to the multiple summing norm.

We will need the following remark, which may be seen as a Khintchine/Kahane type  multilinear inequality for the stable
measures.
\begin{remark}\label{khintchine estable}
If $T$ is an $m$-linear form on $\mathbb K^N$ and $q\le p<s<2$, or $q\le p$ and $s=2$, then
\begin{multline*}
 c_{s,p}^{-m}\Big(\int_{\mathbb K^N}\dots\int_{\mathbb
K^N}|T(z^{(1)},\dots,z^{(m)})|^pd\mu^N_{s}(z^{(1)})\dots d\mu^N_{s}(z^{(m)})\Big)^{1/p} \\
 \le c_{s,q}^{-m}\Big(\int_{\mathbb K^N}\dots\int_{\mathbb
K^N}|T(z^{(1)},\dots,z^{(m)})|^qd\mu^N_{s}(z^{(1)})\dots d\mu^N_{s}(z^{(m)})\Big)^{1/q}.
\end{multline*}
For $m=1$ it follows from property \eqref{estable} of L\'evy stable measures, and then we just apply induction on $m$.
\end{remark}

Recall that a Banach sequence space $X$ is called {\it $q$-concave}, $q\ge1$, if there
exists $C>0$ such that for any $x_1,\dots,x_n\in X$ we have
$$
\Big(\sum_{k=1}^n\|x_k\|_X^q\Big)^\frac1{q}\le C\Big\|\Big(\sum_{k=1}^n|x_k|^q\Big)^{1/q}\Big\|_X.
$$
\begin{lemma}\label{lema1}
 Let $X$ be a $q$-concave Banach sequence space with constant $C$ and let
$T\in\mathcal
L(^mE;X)$ be an $m$-linear operator. Denote by $T_j$ the $j$-coordinate of
$T$ ($T_j$ is a scalar $m$-linear form). Then $\pi_q(T)\le C \|(\pi_q(T_j))_j\|_X$.
\end{lemma}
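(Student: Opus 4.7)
The plan is to unwind the definition of the multiple $q$-summing norm and reduce the estimate, via $q$-concavity plus the lattice property of $X$, to the scalar statement applied coordinate-by-coordinate.

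Fix finite families $(x^1_{k_1})_{k_1},\dots,(x^m_{k_m})_{k_m}\subset E$ with $w_q((x^i_{k_i})_{k_i})\le 1$ for every $i$. Writing $k=(k_1,\dots,k_m)$ and viewing $T(x^1_{k_1},\dots,x^m_{k_m})\in X\subset\mathbb K^{\mathbb N}$ as the sequence whose $j$-th coordinate is $T_j(x^1_{k_1},\dots,x^m_{k_m})$, I would first apply $q$-concavity of $X$ to the finite family $\{T(x^1_{k_1},\dots,x^m_{k_m})\}_{k}$ to get
\[
\Big(\sum_{k}\|T(x^1_{k_1},\dots,x^m_{k_m})\|_X^q\Big)^{1/q}\le C\,\Big\|\Big(\sum_{k}|T(x^1_{k_1},\dots,x^m_{k_m})|^q\Big)^{1/q}\Big\|_X.
\]
The object inside the $X$-norm on the right-hand side is, coordinate-wise in $j$, the scalar quantity $\Big(\sum_{k}|T_j(x^1_{k_1},\dots,x^m_{k_m})|^q\Big)^{1/q}$.

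Next, for each fixed coordinate $j$, the definition of multiple $q$-summing norm applied to the scalar form $T_j$ gives
\[
\Big(\sum_{k}|T_j(x^1_{k_1},\dots,x^m_{k_m})|^q\Big)^{1/q}\le \pi_q(T_j)\cdot w_q((x^1_{k_1}))\cdots w_q((x^m_{k_m}))\le \pi_q(T_j).
\]
Now I invoke the lattice property built into the definition of Banach sequence space: since the $j$-th coordinate of our sequence is dominated in absolute value by $\pi_q(T_j)$, we deduce
\[
\Big\|\Big(\sum_{k}|T(x^1_{k_1},\dots,x^m_{k_m})|^q\Big)^{1/q}\Big\|_X\le \big\|(\pi_q(T_j))_j\big\|_X.
\]

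Combining the two displays yields $\Big(\sum_{k}\|T(x^1_{k_1},\dots,x^m_{k_m})\|_X^q\Big)^{1/q}\le C\,\|(\pi_q(T_j))_j\|_X$, and taking the supremum over admissible families $(x^i_{k_i})$ gives $\pi_q(T)\le C\,\|(\pi_q(T_j))_j\|_X$. No step looks like a real obstacle; the only thing to be careful about is confirming that the right-hand side is finite (otherwise the inequality is trivial), and that the scalar coordinates $T_j$ really are multiple $q$-summing, but this is automatic from $|T_j(\cdot)|\le \|T(\cdot)\|_X$ together with $T$ being $q$-summing (which we may freely assume, or handle via the norm-one inclusion $X\hookrightarrow\ell_\infty$).
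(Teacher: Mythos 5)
Your proof is correct and follows essentially the same route as the paper's: fix finite families with $w_q\le 1$, apply $q$-concavity to the family of values $T(x^1_{k_1},\dots,x^m_{k_m})$, bound each coordinate by $\pi_q(T_j)$ via the definition of the multiple $q$-summing norm, and conclude with the lattice property of $X$. The paper's proof is a one-line version of exactly this argument.
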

\begin{proof} Just note that for finite sequences $\big(u_{n_k}^{(k)}\big)_{n_k}\subset X$, with
$w_q\Big(\big(u_{n_k}^{(k)}\big)_{n_k}\Big)=1$ we have
 $$
\Big(\sum_{n_1,\dots,n_m}\|T(u_{n_1}^{(1)},\dots,u_{n_m}^{(m)})\|_X^q\Big)^{1/q} \le C
\Big\|\Big(\big(\sum_{n_1,\dots,n_m}|T_j(u_{n_1}^{(1)},\dots,u_{n_m}^{(m)})|^q\big)^{1/q}\Big)_j\Big\|_X \le
C\big\|(\pi_q(T_j))_j\big\|_X. $$
\end{proof}

\begin{lemma}\label{lema2}
Let $X$ be a Banach sequence space  and let $T\in\mathcal
L(^m\ell_r^N;X)$ be an $m$-linear operator, $r\ge2$.
Then if either $p,q<r'<2$, or $r=2$, then
$$\|(\pi_p(T_j))_j\|_X\le c_{r',1}^{-m} \int_{\mathbb K^N}\dots\int_{\mathbb
K^N}\|T(z^{(1)},\dots,z^{(m)})\|_Xd\mu^N_{r'}(z^{(1)})\dots d\mu^N_{r'}(z^{(m)})\le
(c_{r',q}/c_{r',1})^{m} \pi_q(T).$$
\end{lemma}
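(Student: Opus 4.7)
\emph{First inequality.} My plan is to apply the scalar integral formula coordinatewise, then lift to $X$ using its lattice structure. Fix $j$; since $T_j:(\ell_r^N)^m\to\mathbb K$ is a scalar multilinear form, Theorem~\ref{formulita escalar} applies and gives
$$\pi_p(T_j)=c_{r',p}^{-m}\Big(\int\cdots\int |T_j(z^{(1)},\dots,z^{(m)})|^p\,d\mu_{r'}^N(z^{(1)})\cdots d\mu_{r'}^N(z^{(m)})\Big)^{1/p}.$$
Next I will invoke the multilinear Khintchine/Kahane inequality for stable measures (Remark~\ref{khintchine estable}, used with exponent $1$ in place of $q$) to pass from this $L^p$ integral to an $L^1$ integral, yielding $\pi_p(T_j)\le c_{r',1}^{-m}\int\cdots\int|T_j(z^{(1)},\dots,z^{(m)})|\,d\mu_{r'}^N(z^{(1)})\cdots d\mu_{r'}^N(z^{(m)})$. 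Taking the $X$-norm of the sequence $(\pi_p(T_j))_j$ and using both the lattice identity $\|(|T_j(z)|)_j\|_X=\|T(z)\|_X$ and Minkowski's integral inequality (the triangle inequality for the Bochner integral with values in the Banach sequence space $X$) lets me move the $X$-norm inside the integral, which produces the first inequality.

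\emph{Second inequality.} Here the argument is much shorter. Lemma~\ref{formulita desig facil} applied with $Y=X$ and exponent $q$ in place of $p$ (its hypotheses are contained in ours) yields
$$\Big(\int\cdots\int\|T(z^{(1)},\dots,z^{(m)})\|_X^q\,d\mu_{r'}^N(z^{(1)})\cdots d\mu_{r'}^N(z^{(m)})\Big)^{1/q}\le c_{r',q}^m\,\pi_q(T).$$
Since each $\mu_{r'}^N$ is a probability measure and $q\ge 1$ (as is standard for summing norms), H\"older's inequality gives $\int\|T\|_X\,d\mu\le\bigl(\int\|T\|_X^q\,d\mu\bigr)^{1/q}$, and dividing through by $c_{r',1}^m$ rearranges into the displayed bound.

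\emph{Main obstacle.} The only substantive technical point is the passage from the scalar coordinatewise bound to the vector-valued statement in the first inequality, which relies on the lattice structure of $X$ together with a Bochner-type Minkowski inequality; tracking the constants is then straightforward, the factor $c_{r',1}$ in the upper bound arising precisely because Remark~\ref{khintchine estable} is applied with exponent $1$.
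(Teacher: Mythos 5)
Your proposal is correct and follows essentially the same route as the paper: Theorem~\ref{formulita escalar} applied coordinatewise, Remark~\ref{khintchine estable} with exponent $1$ to trade the normalized $L^p$ integral for the $L^1$ integral, the lattice identity plus Minkowski's integral inequality to pull the $X$-norm inside, and then H\"older on the probability measure combined with Lemma~\ref{formulita desig facil} for the second inequality. The constants you track ($c_{r',1}^{-m}$ from the Khintchine step, $c_{r',q}^{m}$ from Lemma~\ref{formulita desig facil}) match the paper's exactly.
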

\begin{proof}
 By Theorem \ref{formulita escalar}, Remark \ref{khintchine
estable}
and  Lemma \ref{formulita desig facil} we have
\begin{eqnarray*}
\|(\pi_p(T_j))_j\|_X &= & c_{r',p}^{-m}\Big\|\left(\Big(\int_{\mathbb K^N}\dots\int_{\mathbb
K^N}|T_j(z^{(1)},\dots,z^{(m)})|^pd\mu^N_{r'}(z^{(1)})\dots d\mu^N_{r'}(z^{(m)})\Big)^{1/p}\right)_j\Big\|_X
\\
&\le & c_{r',1}^{-m}\Big\|\Big(\int_{\mathbb K^N}\dots\int_{\mathbb
K^N}|T_j(z^{(1)},\dots,z^{(m)})|d\mu^N_{r'}(z^{(1)})\dots d\mu^N_{r'}(z^{(m)})\Big)\Big\|_X \\
&\le & c_{r',1}^{-m} \int_{\mathbb K^N}\dots\int_{\mathbb
K^N}\|T(z^{(1)},\dots,z^{(m)})\|_Xd\mu^N_{r'}(z^{(1)})\dots d\mu^N_{r'}(z^{(m)})\\
&\le & c_{r',1}^{-m} \Big(\int_{\mathbb K^N}\dots\int_{\mathbb
K^N}\|T(z^{(1)},\dots,z^{(m)})\|_X^qd\mu^N_{r'}(z^{(1)})\dots d\mu^N_{r'}(z^{(m)})\Big)^{1/q}\\
&\le & (c_{r',q}/c_{r',1})^{m} \pi_q(T).
\end{eqnarray*}
\end{proof}

As a consequence of Lemma \ref{formulita desig facil} we obtain one  inequality in the following result. For the other inequality, note that if $X$ is $q$-concave, then it is also $s$-concave for any $s\ge q$ and apply the previous two lemmas.

\begin{corollary}\label{formulita q-concave}
 Let $X$ be a $q$-concave Banach sequence space and let $T\in\mathcal
L(^m\ell_r^N;X)$. Then for $r=2$ and $q\le s$, or $q\le s<r'<2$, we have
\begin{eqnarray*} \pi_s(T)   \asymp
 \Big(\int_{\mathbb K^N}\dots\int_{\mathbb
K^N}\|T(z^{(1)},\dots,z^{(m)})\|_X^qd\mu^N_{r'}(z^{(1)})\dots d\mu^N_{r'}(z^{(m)})\Big)^{1/q}  \\
\end{eqnarray*}
\end{corollary}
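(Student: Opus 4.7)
The approach is to combine the integral upper bound from Lemma \ref{formulita desig facil} with the two preceding lemmas (\ref{lema1} and \ref{lema2}), using the probabilistic nature of $\mu_{r'}^N$ to pass freely between the exponents $q$ and $s$.

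For the lower bound on $\pi_s(T)$, the plan is to apply Lemma \ref{formulita desig facil} with $p=s$, which is legitimate under both cases of the hypothesis (either $r=2$, or $s<r'<2$ since $q\le s < r'$). This yields
$$
\Big(\int_{\mathbb K^N}\dots\int_{\mathbb K^N}\|T(z^{(1)},\dots,z^{(m)})\|_X^{s}\,d\mu^N_{r'}(z^{(1)})\dots d\mu^N_{r'}(z^{(m)})\Big)^{1/s}\le c_{r',s}^m\,\pi_s(T).
$$
Since $\mu^N_{r'}\times\dots\times\mu^N_{r'}$ is a probability measure and $q\le s$, monotonicity of $L^p$ norms on probability spaces gives
$$
\Big(\int\|T\|_X^{q}\,d\mu\Big)^{1/q}\le\Big(\int\|T\|_X^{s}\,d\mu\Big)^{1/s},
$$
closing this direction.

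For the upper bound on $\pi_s(T)$, I would first invoke the classical Banach lattice fact that $q$-concavity of $X$ implies $s$-concavity for every $s\ge q$. Then Lemma \ref{lema1}, applied with $s$ in place of $q$, gives $\pi_s(T)\le C_s\,\|(\pi_s(T_j))_j\|_X$, where $T_j$ is the $j$-th coordinate of $T$. The first inequality in Lemma \ref{lema2}, used with $p=s$ (whose hypothesis $s<r'<2$ or $r=2$ is met), yields
$$
\|(\pi_s(T_j))_j\|_X\le c_{r',1}^{-m}\int_{\mathbb K^N}\dots\int_{\mathbb K^N}\|T(z^{(1)},\dots,z^{(m)})\|_X\,d\mu^N_{r'}(z^{(1)})\dots d\mu^N_{r'}(z^{(m)}).
$$
A final appeal to monotonicity of $L^p$-norms on the probability space bounds this last integral by $\big(\int\|T\|_X^q\,d\mu\big)^{1/q}$, completing the proof.

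The argument consists entirely of chaining previously established estimates; the only ingredient not already present in the paper is the standard Banach-lattice fact that $q$-concavity is inherited by every larger exponent. I do not foresee any genuine obstacle, though care is needed to verify that the exponent conditions of Lemmas \ref{formulita desig facil} and \ref{lema2} are indeed satisfied with $p=s$ under both alternatives in the hypothesis.
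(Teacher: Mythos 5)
Your proposal is correct and follows exactly the route the paper indicates: the lower estimate for the integral comes from Lemma~\ref{formulita desig facil} with $p=s$ plus monotonicity of $L^p$-norms on the probability space, and the upper estimate for $\pi_s(T)$ comes from the fact that $q$-concavity implies $s$-concavity together with Lemmas~\ref{lema1} and~\ref{lema2} applied at the exponent $s$. Your verification that the exponent hypotheses of those lemmas are satisfied under both alternatives of the corollary is the only detail the paper leaves implicit, and you have checked it correctly.
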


Standard localization techniques and the previous corollary readily show the coincidence of multiple
$s-$summing and multiple $q-$summing operators from  $\mathcal L_r^g$-spaces to $q$-concave Banach sequence
spaces.

\begin{corollary}
 Let $X$ be a $q$-concave Banach sequence space, and let $E$ be an $\mathcal L_r^g$-space.
Then
$$
\Pi_s(^mE;X)=\Pi_q(^mE;X),
$$
for $q\le s<r'<2$, or $q\le s$ and $r=2$.	
\end{corollary}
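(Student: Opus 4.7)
The plan is to transport the norm equivalence of Corollary~\ref{formulita q-concave} from the concrete setting $\ell_r^N$ to the abstract $\mathcal L_r^g$-space $E$ by a standard finite-dimensional factorization argument. Since $\pi_t(T)$ is determined by testing $T$ against finite families of vectors, it suffices to control the sums appearing in its definition using factorizations of the finite-dimensional span of each such family.

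Fix $T\in\mathcal L(^m E;X)$ and finite sequences $(x^k_{j_k})_{j_k=1}^{J_k}\subset E$, $k=1,\dots,m$. Let $M\subset E$ be their linear span. Since $E$ is $\mathcal L_{r,\lambda}^g$ for some $\lambda\ge 1$, for every $\varepsilon>0$ I can factor $I_M^E = S\circ R$ through some $\ell_r^N$ with operators $R\colon M\to\ell_r^N$ and $S\colon\ell_r^N\to E$ satisfying $\|R\|\,\|S\|\le\lambda+\varepsilon$. Set $\tilde T := T\circ(S,\dots,S)\in\mathcal L(^m\ell_r^N;X)$. Then $T(x^1_{j_1},\dots,x^m_{j_m}) = \tilde T(Rx^1_{j_1},\dots,Rx^m_{j_m})$, and the standard bounds $w_t((Rx^k_{j_k})) \le \|R\|\,w_t((x^k_{j_k}))$ together with the ideal inequality $\pi_t(\tilde T) \le \|S\|^m\,\pi_t(T)$ hold for $t\in\{q,s\}$.

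For the core step, I would apply Corollary~\ref{formulita q-concave} to $\tilde T$: it gives $\pi_s(\tilde T) \asymp \pi_q(\tilde T)$, since both are equivalent to the \emph{same} integral $\big(\int \|\tilde T(z^{(1)},\dots,z^{(m)})\|_X^q\,d\mu^N_{r'}(z^{(1)})\cdots d\mu^N_{r'}(z^{(m)})\big)^{1/q}$. Combining this with the two bounds from the previous paragraph yields both $\pi_q(T)\le C(\lambda+\varepsilon)^m\pi_s(T)$ and $\pi_s(T)\le C(\lambda+\varepsilon)^m\pi_q(T)$, where $C$ is the equivalence constant from the corollary. Letting $\varepsilon\to 0$ delivers the equality $\Pi_s(^m E;X)=\Pi_q(^m E;X)$ with equivalent norms.

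The only delicate point, and the one I would double-check carefully, is that the constant $C$ in Corollary~\ref{formulita q-concave} is independent of the dimension $N$. This uniformity is essential, since $N$ depends on the finite configuration being tested and any residual $N$-dependence would prevent the localization from closing. Tracking the constants through Lemmas~\ref{lema1}, \ref{lema2}, and \ref{formulita desig facil}, they involve only the moments $c_{r',\bullet}$ of the $r'$-stable measure and the $q$-concavity constant of $X$, none of which depends on $N$, so the required uniformity does hold and the argument goes through.
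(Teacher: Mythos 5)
Your proposal is correct and is precisely the argument the paper has in mind: the paper's proof of this corollary is the single sentence ``Standard localization techniques and the previous corollary readily show\dots'', and your write-up is exactly that localization (factoring the span of the test vectors through $\ell_r^N$, applying Corollary~\ref{formulita q-concave} to $T\circ(S,\dots,S)$, and using the $N$-independence of the constants). Nothing further is needed.
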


 Proceeding as above we may prove the following.
 \begin{corollary}\label{pi_q en L_q}
Let $X$ be an $\mathcal L_{q,1}^g$-space and let $T\in\mathcal
L(^m\ell_r^N;X)$ be an $m$-linear operator. If $q<r'<2$ or,
$r=2$, then we have	
$$\pi_q(T) = c_{r',q}^{-m}\Big(\int_{\mathbb K^N}\dots\int_{\mathbb
K^N}\|T(z^{(1)},\dots,z^{(m)})\|_{X}^qd\mu^N_{r'}(z^{(1)})\dots d\mu^N_{r'}(z^{(m)})\Big)^{1/q}.$$
\end{corollary}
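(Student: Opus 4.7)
The plan is to obtain the equality by combining two ingredients: Lemma~\ref{formulita desig facil} immediately gives $\bigl(\int\|T\|_X^q\,d\mu_{r'}^N\otimes\cdots\otimes d\mu_{r'}^N\bigr)^{1/q}\le c_{r',q}^{m}\pi_q(T)$, so the whole task reduces to proving the reverse bound $\pi_q(T)\le c_{r',q}^{-m}\bigl(\int\|T\|_X^q\,d\mu_{r'}^N\otimes\cdots\otimes d\mu_{r'}^N\bigr)^{1/q}$. My strategy is to handle $X=\ell_q^k$ first with an \emph{exact} constant, and then to transfer this via the $\mathcal L_{q,1}^g$ factorization at the cost of a factor $1+\varepsilon$ that disappears in the limit.

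First I treat $X=\ell_q^k$. Writing the coordinates of $T$ as $T_1,\dots,T_k$ and using the fact that $\ell_q^k$ is $q$-concave with constant exactly $1$, Lemma~\ref{lema1} yields $\pi_q(T)\le\|(\pi_q(T_j))_j\|_{\ell_q^k}$. Theorem~\ref{formulita escalar} applied coordinatewise gives the exact identity $\pi_q(T_j)=c_{r',q}^{-m}\bigl(\int|T_j|^q\,d\mu_{r'}^N\otimes\cdots\otimes d\mu_{r'}^N\bigr)^{1/q}$, and raising to the $q$-th power, summing over $j$, and applying Fubini reassembles $\sum_j|T_j(z^{(1)},\dots,z^{(m)})|^q=\|T(z^{(1)},\dots,z^{(m)})\|_{\ell_q^k}^q$. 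This delivers the desired equality for $X=\ell_q^k$.

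For the general case, I observe that the range of $T$ is contained in the finite-dimensional subspace $M:=\mathrm{span}\{T(e_{i_1},\dots,e_{i_m}):1\le i_1,\dots,i_m\le N\}\subset X$. Given $\varepsilon>0$, the $\mathcal L_{q,1}^g$-hypothesis provides a factorization $I_M^X=S\circ R$ with $R:M\to\ell_q^k$, $S:\ell_q^k\to X$ and $\|S\|\|R\|\le 1+\varepsilon$. Setting $\tilde T:=R\circ T$, regarded as a map into $\ell_q^k$, we have $T=S\circ\tilde T$, and the ideal property of $\pi_q$ combined with the already-proved $\ell_q^k$-case yields
\begin{equation*}
\pi_q(T)\le\|S\|\pi_q(\tilde T)=\|S\|\,c_{r',q}^{-m}\Bigl(\int\|\tilde T\|_{\ell_q^k}^q\,d\mu_{r'}^N\otimes\cdots\otimes d\mu_{r'}^N\Bigr)^{1/q}\le(1+\varepsilon)\,c_{r',q}^{-m}\Bigl(\int\|T\|_X^q\,d\mu_{r'}^N\otimes\cdots\otimes d\mu_{r'}^N\Bigr)^{1/q},
\end{equation*}
where the last step uses $\|\tilde T(z^{(1)},\dots,z^{(m)})\|_{\ell_q^k}\le\|R\|\,\|T(z^{(1)},\dots,z^{(m)})\|_X$. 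Letting $\varepsilon\to 0$ concludes the argument.

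There is no real obstacle in this plan; the delicate points are just that the $q$-concavity constant of $\ell_q^k$ must be tracked as $1$ rather than some generic constant in Lemma~\ref{lema1}, and that the $\mathcal L_{q,1}^g$-factorization is controlled by $1+\varepsilon$ rather than by a fixed $\lambda>1$. Without these two sharpness features the argument would only recover the equivalence in Corollary~\ref{formulita q-concave}, not the equality stated here.
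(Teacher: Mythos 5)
Your argument is correct and fills in exactly what the paper means by ``proceeding as above'': the exact case $X=\ell_q^k$ via Lemma~\ref{lema1} (with $q$-concavity constant $1$) and the coordinatewise application of Theorem~\ref{formulita escalar}, followed by the $\mathcal L_{q,1}^g$ localization with constant $1+\varepsilon$, and the reverse inequality from Lemma~\ref{formulita desig facil}. The two sharpness points you flag (concavity constant $1$ and $\lambda=1$) are indeed precisely what upgrades the equivalence of Corollary~\ref{formulita q-concave} to the stated equality.
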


We have almost finished the proofs of the main results.
\begin{proof}[Proof of Theorem \ref{formulita}]
It is clearly enough to show the result for operators with range in $\ell_q^M$ for some $M\in\mathbb N$.

One inequality is Lemma \ref{formulita desig facil}.
For the other inequality, if either $r=q=2$ or; $r=2$ and $p<q<2$ or;  $p<r'<2$ and $p<q\le2$, a
combination
of the previous results gives:
\begin{eqnarray*}
 \pi_p(T) & \le & c_{q,p}^{-1}\Big(\int_{\mathbb K^N}\pi_p(z\circ T)^pd\mu^N_{q}(z)\Big)^{1/p} \\
 & \le & c_{r',p}^{-m}c_{q,p}^{-1}\Big(\int_{\mathbb K^N}\Big(\int_{\mathbb K^N}\dots\int_{\mathbb K^N}|z\circ
T(z^{(1)},\dots,z^{(m)})|^pd\mu^N_{r'}(z^{(1)})\dots d\mu^N_{r'}(z^{(m)})\Big)d\mu^N_{q}(z)\Big)^{1/p} \\
& \le & c_{r',p}^{-m}\Big(\int_{\mathbb K^N}\dots\int_{\mathbb K^N}\Big(c_{q,p}^{-p}\int_{\mathbb K^N}|z\circ
T(z^{(1)},\dots,z^{(m)})|^pd\mu^N_{q}(z)\Big)d\mu^N_{r'}(z^{(1)})\dots d\mu^N_{r'}(z^{(m)})\Big)^{1/p} \\
& \le & c_{r',p}^{-m}\Big(\int_{\mathbb K^N}\dots\int_{\mathbb
K^N}\pi_p\big(T(z^{(1)},\dots,z^{(m)});\ell_{q'}^M\to\mathbb K\big)^pd\mu^N_{r'}(z^{(1)})\dots
d\mu^N_{r'}(z^{(m)})\Big)^{1/p} \\
& = & c_{r',p}^{-m}\Big(\int_{\mathbb K^N}\dots\int_{\mathbb
K^N}\|T(z^{(1)},\dots,z^{(m)})\|_{\ell_q^M}^pd\mu^N_{r'}(z^{(1)})\dots d\mu^N_{r'}(z^{(m)})\Big)^{1/p}, \\
\end{eqnarray*}
where by $\pi_p\big(T(z^{(1)},\dots,z^{(m)});\ell_{q'}^M\to\mathbb K\big)$ we denote the absolutely
$p$-summing norm of the vector $T(z^{(1)},\dots,z^{(m)})$ thought of as a linear functional on $\ell_{q'}^M$,
whose norm is just the $\ell_{q}^M$-norm of the vector.

The cases where $p=q$ follow from Corollary \ref{pi_q en L_q}.
\end{proof}
\begin{proof}[Proof of Proposition \ref{normas equivalentes}]
($i$)
For $r=2$, the equivalence of norms is a consequence of Theorem \ref{formulita} when $p<q\le 2$ or $p=q$ and
of
Corollary \ref{formulita q-concave} for $q\le p$.

For $r>2$, the equivalence of norms is a consequence of Theorem \ref{formulita} when $p<r'$ and $p<q\le 2$ and
of
Corollary \ref{formulita q-concave} $q\le p<r'$.

($ii$) This statement follows from Lemma \ref{formulita desig facil}.
\end{proof}

\begin{proof}[Proof of Proposition \ref{prop inclusion}] The first assertion follows from Corollary~\ref{formulita q-concave} and localization. For the second assertion just combine  Lemma \ref{lema1} with Lemma \ref{lema2}.
\end{proof}

\end{document}